\theoremstyle{plain}
\newtheorem{theorem}{Theorem}
\newtheorem{lemma}[theorem]{Lemma}
\newtheorem{proposition}[theorem]{Proposition}
\theoremstyle{remark}
\newtheorem{remark}[theorem]{Remark}
\newcommand{\N}{\mathbb{N}}
\newcommand{\R}{\mathbb{R}}
\newcommand{\Cleq}{\ensuremath{\lesssim}}  
\DeclareMathOperator{\diam}{diam}                             
\newcommand{\Mesh}{\mathcal{M}}
\newcommand{\Faces}[1]{\mathcal{F}_{#1}}
\newcommand{\FacesM}{\Faces{}}
\newcommand{\FacesMint}{\FacesM^i}
\newcommand{\Skel}{\Sigma}
\newcommand{\Jump}[1]{\left \llbracket #1 \right \rrbracket}
\newcommand{\Avg}[1]{\left \{\!\!\left \{ #1 \right \}  \!\!\right \}}
\newcommand{\Shape}{\gamma}
\newcommand{\Normal}{n}
\newcommand{\RefSim}{K_\Ref}
\newcommand{\Ref}{{\mathrm{ref}}}
\newcommand{\degree}{\ell}
\newcommand{\Poly}[1]{\mathbb{P}_{#1}}                        
\newcommand{\Polyell}[1]{S_{#1}^0}           
\newcommand{\PolyellAvg}[1]{\widehat{S}_{#1}^0}
\newcommand{\Lagr}[1]{S_{#1}^1}              
\newcommand{\CR}[1]{\mathit{CR}_{#1}} 
\newcommand{\Domain}{\Omega}
\newcommand{\Dim}{d}
\newcommand{\Leb}[1]{L^2(#1)}                                
\newcommand{\LebH}[1]{L^2_0(#1)}
\newcommand{\Sob}[1]{H^1(#1)}                              
\newcommand{\SobH}[1]{H^1_0(#1)}
\newcommand{\SobHD}[1]{H^{-1}(#1)}
\DeclareMathOperator{\Div}{div}
\DeclareMathOperator{\DivM}{\Div_{\Mesh}}
\newcommand{\Divdisc}[1]{\Div_{#1}}
\newcommand{\Grad}{\nabla}
\DeclareMathOperator{\GradM}{\nabla_{\Mesh}}             
\DeclareMathOperator{\Rot}{rot}
\DeclareMathOperator{\Curl}{curl}
\newcommand{\Smt}{E}
\newcommand{\PiolaCon}{{\mathcal{P}^{con}_K}}
\newcommand{\PiolaCov}{{\mathcal{P}^{cov}_K}}
\newcommand{\snorm}[1]{\left \lvert #1 \right \rvert}          
\newcommand{\norm}[1]{\| #1 \|}                     
\newcommand{\normLeb}[2]{\norm{#1}_{\Leb{#2}}}
\newcommand{\opnorm}[3]{\norm{#1}_{\ifx#2#3\mathcal{L}(#2)\else\mathcal{L}(#2,#3)\fi}}       
\newcommand{\Visc}{\mu}
\newcommand{\DG}{\mathrm{dG}}
\begin{document}


\title[Quasi-optimal and pressure robust dG discretizations of
Stokes]{Quasi-optimal and pressure robust\\ discretizations of the
  Stokes equations by\\moment- and divergence-preserving operators}

\author[C.~Kreuzer]{Christian Kreuzer}
\address{TU Dortmund \\ Fakult{\"a}t f{\"u}r Mathematik \\ D-44221 Dortmund \\ Germany}
\email{christian.kreuzer@tu-dortmund.de}
\author[R.~Verf\"urth]{R\"udiger Verf\"urth}
\address{Ruhr-Universit\"at Bochum\\ Fakult{\"a}t f{\"u}r Mathematik
  \\ D-44780 Bochum \\ Germany}
\email{ruediger.verfuerth@rub.de}
\author[P.~Zanotti]{Pietro Zanotti}
\address{Universit\`a degli Studi di Milano \\ Dipartimento di Matematica 'F. Enriques' \\ I-20133 Milano \\ Italy}
\email{pietro.zanotti@unimi.it}

\keywords{quasi-optimality, pressure robustness, discontinuous Galerkin, Stokes equations, finite element method}

\subjclass[2010]{65N30, 65N12, 65N15}

\begin{abstract}
We approximate the solution of the Stokes equations by a new
quasi-optimal and pressure robust
discontinuous Galerkin discretization of arbitrary order.
This means quasi-optimality of the velocity error independent of the
pressure. Moreover, the discretization is well-defined for any load
which is admissible for the continuous problem and it also provides
classical quasi-optimal estimates for the sum of velocity and pressure
errors. The key design principle is a careful discretization of the
load involving a linear operator, which maps discontinuous
Galerkin test functions onto conforming ones thereby preserving the
discrete divergence and certain moment conditions on faces and elements.
\end{abstract}

\maketitle

\section{Introduction}
\label{S:introduction}

This paper is a new contribution to the research programme initiated
in \cite{Kreuzer.Zanotti:19,Verfuerth.Zanotti:19}, which aims
at designing quasi-optimal and pressure robust discretizations of the
Stokes equations 
\begin{equation}
\label{Stokes-strong}
-\Visc \Delta u + \Grad p = f
\quad \text{and} \quad 
\Div u = 0
\quad \text{in } \Domain,
\qquad
u = 0 
\quad \text{on } \partial \Domain 
\end{equation}
for the largest possible class of inf-sup stable pairs of finite
element spaces. 

To illustrate our results, let $V/Q$ be an inf-sup stable pair and
assume that a given discretization produces an approximation
$(\overline{u}, \overline p) \in V \times Q$ to the solution $(u, p)$
of \eqref{Stokes-strong}. Moreover, let $\norm{\cdot}_1$ be a
$H^1$-like norm. We say that the given discretization is
\textit{quasi-optimal} when there is a constant $C_\mathrm{qo}\geq 1$
such that 
\begin{equation}
\label{qo}
\Visc \norm{u-\overline{u}}_1
+
\normLeb{p-\overline{p}}{\Domain}
\leq C_\mathrm{qo}
\left(\Visc  \inf_{v \in V} \norm{u-v}_1  + 
\inf_{q \in Q}\normLeb{p-q}{\Domain} \right). 
\end{equation} 
Analogously, we say that the given discretization is \textit{quasi-optimal and pressure robust} when there is a constant $C_\mathrm{qopr} \geq 1$ such that
\begin{equation}
\label{qopr}
\norm{u-\overline{u}}_1 
\leq C_{\mathrm{qopr}}
\inf_{v \in V} \norm{u-v}_1.
\end{equation}

Any discretization fulfilling the above error estimates 
\begin{itemize}
\item is defined for any admissible load $f$ in the weak formulation of \eqref{Stokes-strong}
\item inherits the approximation properties of the underlying spaces $V$ and $Q$, irrespective of the regularity of $(u, p)$ and $f$,
\item is pressure robust, in the sense that~\eqref{qopr} implies that large irrotational forces (or, equivalently, large pressure errors) do not affect the velocity error, cf. Remark~\ref{R:pressure-robustness} below.
\end{itemize}
Whereas the first two properties are desirable in the discretization of any equation, the third one is specific to the (Navier-)Stokes equations. Its importance has been pointed out in \cite{Linke:14} and further investigated in various other references, see e.g. \cite{John.Linke.Merdon.Neilan.Rebholz:17}. 

Most Stokes discretizations based on nonconforming pairs fail to
fulfill \eqref{qo}. Analogously, most discretizations with other pairs
than divergence-free ones fail to fulfill \eqref{qopr}. Both claims
follow from the abstract results in
\cite{Kreuzer.Zanotti:19,Veeser.Zanotti:18}. Indeed, the combination
of \eqref{qo} and \eqref{qopr} has been available for a long time only
for discretizations based on conforming and divergence-free pairs,
like the one of Scott and Vogelius~\cite{Scott.Vogelius:85}.
The importance of pressure robustness was observed in~\cite{Linke:14},
where pressure robust schemes are proposed using
\(H(\Div)\)-conforming maps applied to the test functions. As a trade
off, the quasi-optimality was weakened by involving additional
consistency errors; compare also with the overview article~\cite{John.Linke.Merdon.Neilan.Rebholz:17}. Here and in~\cite{Kreuzer.Zanotti:19,Verfuerth.Zanotti:19}, we design
quasi-optimal and pressure robust discretizations by devising, in particular, alternative $H^1$-conforming maps applied to test functions. 
 
The discretization proposed in \cite{Verfuerth.Zanotti:19} uses the first-order nonconforming Crouzeix-Raviart pair and can be written as follows: find $\overline u \in V$ and $\overline p \in Q$ such that 
\begin{equation}
\label{Stokes-disc}
\begin{alignedat}{2}
&\forall v \in V
&\qquad
\Visc \,a(\overline{u}, v)
+ b(v, \overline p) 
&= \left\langle  f , \Smt v \right\rangle  \\
&\forall q \in Q
&\qquad
b(\overline u, q) &= 0 
\end{alignedat}
\end{equation}  
where the forms $a$ and $b$ are as in the original discretization
described in \cite{Crouzeix.Raviart:73}. The operator $\Smt$ maps into
continuous piecewise polynomials and preserves the discrete divergence
and the averages on the mesh faces. This idea has been generalized in
\cite{Kreuzer.Zanotti:19} to a wide class of pairs, under the same
conditions on $\Smt$. The only difference is that the form $a$ needs
to be augmented with additional terms. In this paper we propose a
different approach, which does not require any augmentation of $a$, at
the price of a more involved construction of $\Smt$.  

More precisely, we propose a class of discontinuous Galerkin
discretizations of arbitrary order $\degree \geq 1$, which differ from
the ones in \cite{Hansbo.Larson:02} only in the use of an operator
$\Smt$ as in \eqref{Stokes-disc}. Here $\Smt$ is required to preserve
the discrete divergence and all moments up to the order $\degree-1$ on
the mesh faces and up to the order $\degree-2$ in the mesh
elements. The same approach applies also to
$H_\mathrm{div}$-conforming pairs \cite{Cockburn.Kanschat.Schotzau:07}
and with higher-order Crouzeix-Raviart pairs
\cite{Baran.Stoyan:07,Crouzeix.Falk:89}, but fails when dealing with
pairs involving a reduced integration of the divergence.  

The remaining part of this paper is organized as follows. In
section~\ref{S:abstract-theory} we propose the new discretization and
motivate the above-mentioned conditions on
$\Smt$. Section~\ref{S:smoother} is devoted to the construction of
$\Smt$ and to the derivation of the error estimates. Finally, in
section~\ref{S:numerical-experiments} we investigate numerically the
proposed discretization in the lowest-order case. We indicate Lebesgue
and Sobolev spaces and their norms as usual, see
e.g. \cite{Brenner.Scott:08}.  

\section{Discontinuous Galerkin discretization of the Stokes equations}
\label{S:abstract-theory}

\subsection{Stokes equations}
\label{SS:Stokes}

Let $\Domain \subseteq \R^\Dim$, $\Dim \in \{2,3\}$, be an open and
bounded polyhedron with Lipschitz boundary. The variational
formulation of the Stokes equations in
$\Domain$, with viscosity $ \Visc > 0$, load $f \in \SobHD{\Domain} :=
(\SobH{\Domain}^\Dim)'$ and homogeneous essential boundary conditions,
reads as follows: find a velocity $u \in \SobH{\Domain}^\Dim$ and a
pressure $p \in \LebH{\Domain}$ such that 
\begin{equation}
\label{Stokes}
\begin{alignedat}{2}
&\forall v \in \SobH{\Domain}^\Dim&
\qquad
\Visc \int_\Domain \Grad u \colon \Grad v 
- \int_\Domain p \Div v 
&= \left\langle  f , v \right\rangle  \\
&\forall q \in \LebH{\Domain}&
\qquad
\int_\Domain q \Div u &= 0.
\end{alignedat}
\end{equation}
Here $\colon$ denotes the euclidean scalar product of $\Dim \times
\Dim$ tensors and $\left\langle \cdot, \cdot \right\rangle $ is the
dual pairing of $\SobHD{\Domain}$ and $\SobH{\Domain}^\Dim$. Note that
we look for the pressure $p$ in the space $\LebH{\Domain} := \{ q \in
\Leb{\Domain} \mid \int_\Domain q = 0 
\}$, according to the boundary condition $u = 0$ on $\partial \Domain$. This problem is well-posed and we have
\begin{equation}
\label{stability-velocity+pressure}
\Visc \normLeb{\Grad u}{\Domain}
+
\normLeb{p}{\Domain}
\leq
c \norm{f}_{\SobHD{\Domain}}
\end{equation} 
where $c$ only depends on the geometry of $\Domain$, see, e.g., \cite[Theorem~8.2.1]{Boffi:Brezzi:Fortin.13}. Moreover, introducing the kernel of the divergence operator
\begin{equation*}
\label{kernel}
Z := \{ z \in \SobH{\Domain}^\Dim \mid \Div z = 0 \},
\end{equation*} 
we infer $u \in Z$ and the a priori estimate 
\begin{equation}
\label{stability-velocity}
\Visc \normLeb{\Grad u}{\Domain}
\leq
\norm{f_{|Z}}_{Z'}
:=
\sup_{z \in Z} \dfrac{\left\langle f , z \right\rangle }{\normLeb{\Grad z}{\Domain}}.
\end{equation}

\subsection{Meshes and polynomials}
\label{SS:meshes-spaces}

Let $\Mesh$ be a face-to-face simplicial mesh of $\Domain$. The shape constant $\Shape_\Mesh$ of $\Mesh$ is given by  
\begin{equation*}
\label{shape-constant}
\gamma_\Mesh := 
\max_{K \in \Mesh} \; \dfrac{h_K}{\rho_K}
\end{equation*}
where $h_K$ is the diameter of a $\Dim$-simplex $K \in \Mesh$ and $\rho_K$ is the diameter of the largest ball inscribed in $K$. We denote by $\FacesM$ and $\FacesMint$ the sets collecting all faces and all interior faces of $\Mesh$, respectively. The skeleton of $\Mesh$ is $\Skel := \cup_{F \in \FacesM} F$. We let the meshsize $h$ and the normal $\Normal$ be the piecewise constant functions on $\Sigma$ given by
\begin{equation*}
\label{meshsize+normal}
h_{|F} := \diam(F) 
\qquad \text{and} \qquad
\Normal_{|F} := n_F 
\end{equation*}
for all $F \in \FacesM$. Here $\Normal_F$ is a unit normal vector of $F$, pointing outside $\Domain$ if $F \subseteq \partial \Domain$.

We denote by $\mathcal{D}_\Mesh$ the broken version of a differential operator $\mathcal{D}$, that is
\begin{equation*}
\label{broken-operators}
(\mathcal{D}_\Mesh v)_{|K} := \mathcal{D}(v_{|K})
\end{equation*}
for all $K \in \Mesh$ and for piecewise smooth $v$. 
We indicate by \(\Jump{v}\) and \(\Avg{v}\), respectively, the jump and
the average of \(v\) on the skeleton $\Skel$ of \(\Mesh\). 
More precisely, for an interior face $F \in \FacesMint$ and  for $x \in F$, we have 
\begin{equation*}
\label{Jump+Avg-interior}
\Jump{v}_{|F}(x)
=
v_{|K_1}(x) - v_{|K_2} (x)
\quad \text{ and } \quad
\Avg{v}_{|F}(x) 
= 
\dfrac{v_{|K_1}(x) + v_{|K_2} (x)}{2} 
\end{equation*} 
where $K_1, K_2 \in \Mesh$ are such that $F = K_1 \cap K_2$ and
$\Normal$ points outside $K_1$. Note that the sign of \(\Jump{v}\)
depends on the orientation of $\Normal$, which will however not be
significant to our discussion. For boundary faces $F \in \FacesM \setminus \FacesMint$, it holds 
\begin{equation*}
\Jump{v}_{|F} (x) 
=
v_{|K} (x)
=
\Avg{v}_{|F} (x),
\end{equation*}
where $K \in \Mesh$ is such that $F = K \cap \partial \Domain$. To
alleviate the notation, we write $\Jump{\Grad \cdot}$ and $\Avg{\Grad
  \cdot}$ in place of $\Jump{\GradM \cdot}$ and $\Avg{\GradM \cdot}$.   

The spaces $\Poly{\degree}(K)$ and $\Poly{\degree}(F)$, $\degree \geq
0$, consist of all polynomials of total degree $\leq \degree$ on a
$\Dim$-simplex $K \in \Mesh$ and a face $F \in \FacesM$,
respectively. For convenience, we set $\Poly{-1} = \{ 0 \}$. The space
of broken polynomials on $\Mesh$ with total degree $\leq \degree$
reads 
\begin{equation*}
\Polyell{\degree} 
:=
\{ v : \Domain \to \R  \mid \forall K \in \Mesh \;\; {v}_{|K} \in \Poly{\degree}(K) \}.
\end{equation*}
The approximation of the pressure space involved in the Stokes equations \eqref{Stokes} motivates the use of the one-codimensional subspace
\begin{equation*}
\PolyellAvg{\degree}
:=
\Polyell{\degree} \cap \LebH{\Domain}. 
\end{equation*}

We shall repeatedly make use of the following integration by parts formula
\begin{equation}
\label{ibp-elementwise}
\int_\Domain (\DivM v) q 
=
-\int_\Domain v \cdot \GradM q 
+ \int_\Skel \Jump{v} \cdot \Normal \Avg{q}
+\int_{\Skel \setminus \partial \Domain} \Avg{v} \cdot \Normal \Jump{q}
\end{equation}
where $v \in (\SobH{\Domain} +\Polyell{\degree})^\Dim$ and $q \in \Polyell{\degree-1}$, see e.g. \cite[equation~(3.6)]{Arnold.Brezzi.Cockburn.Marini:02}.

\subsection{Discontinuous Galerkin discretization}
\label{SS:dG-discretization}

We consider a discontinuous Ga\-lerkin (dG) discretization of order
$\degree \in \N$ of the Stokes equations (see, for instance,
\cite{Hansbo.Larson:02}) that builds on the bilinear forms $a_\DG:
(\Polyell{\degree})^\Dim \times (\Polyell{\degree})^\Dim \to \R$ and
$b_\DG: (\Polyell{\degree})^\Dim \times \PolyellAvg{\degree-1} \to \R$
given by 
\begin{equation}
\label{DG-form-a}
\begin{split}
a_\DG(w, v) := & 
\int_{\Domain} \GradM w \colon \GradM v - 
\int_\Skel \Avg{\Grad w} \Normal \cdot \Jump{v}\\
&-\int_\Skel \Jump{w} \cdot \Avg{\Grad v} \Normal + 
\int_{\Skel} \dfrac{\eta}{h} \Jump{w} \cdot \Jump{v}
\end{split}
\end{equation} 
and
\begin{equation}
\label{DG-form-b}
b_\DG(w, q) :=
-\int_\Domain q \DivM w  + 
\int_\Skel \Jump{w} \cdot \Normal \Avg{q}
\end{equation}
where $\eta > 0$ is a penalty parameter. 
%

Motivated by the abstract results in \cite{Veeser.Zanotti:18}, we let
$\Smt_\DG: (\Polyell{\degree})^\Dim \to \SobH{\Domain}^\Dim$ be a
linear operator and 
consider the following dG discretization of the Stokes equations \eqref{Stokes}: find a discrete velocity $u_\DG \in (\Polyell{\degree})^\Dim$ and a discrete pressure $p_\DG \in \PolyellAvg{\degree-1}$ such that
\begin{equation}
\label{Stokes-dG}
\begin{alignedat}{2}
&\forall v \in (\Polyell{\degree})^\Dim&
\qquad
\Visc \, a_\DG(u_\DG, v) + b_\DG(v, p_\DG) 
&= \left\langle  f , \Smt_\DG v \right\rangle  \\
&\forall q \in \PolyellAvg{\degree-1}&
\qquad
b_\DG(u_\DG, q) &= 0.
\end{alignedat}
\end{equation}

Introducing the discrete divergence $\Divdisc{\DG}: (\Polyell{\degree})^\Dim \to \PolyellAvg{\degree-1}$ through the problem
\begin{equation}
\label{discrete-divergence}
\forall q \in \PolyellAvg{\degree-1}
\qquad
\int_\Domain q \Divdisc{\DG} w
=
-b_\DG(w, q)
\end{equation}
for all $w\ \in (\Polyell{\degree})^\Dim$, we can equivalently rewrite \eqref{Stokes-dG} as follows
\begin{equation*}
\label{Stokes-dG-equiv}
\begin{alignedat}{2}
&\forall v \in (\Polyell{\degree})^\Dim&
\qquad
\Visc \,a_\DG(u_\DG, v) - \int_\Domain  p_\DG \Divdisc{\DG} v
&= \left\langle  f , \Smt_\DG v \right\rangle  \\
&\forall q \in \PolyellAvg{\degree-1}&
\qquad
\int_\Domain q \Divdisc{\DG} u_\DG &= 0.
\end{alignedat}
\end{equation*}
This shows that 
\begin{equation}
\label{kernel-DG}
u_\DG \in Z_\DG := 
\{ w \in (\Polyell{\degree})^\Dim \mid \Divdisc{\DG} w = 0 \}
\end{equation}
i.e. the discrete velocity $u_\DG$ belongs to the kernel of the discrete divergence. 

\begin{remark}[Alternative definition of $\Divdisc{\DG}$]
\label{R:discrete-divergence}
Note that we could equivalently define the discrete divergence as the linear operator $\Divdisc{\DG}: (\Polyell{\degree})^\Dim \to \Polyell{\degree-1}$ given by 
\begin{equation*}
\label{discrete-divergence-alternative}
\forall q \in \Polyell{\degree-1}
\qquad
\int_\Domain q \Divdisc{\DG} w
=
\int_\Domain q \DivM w  - 
\int_\Skel \Jump{w} \cdot \Normal \Avg{q}.
\end{equation*}
Indeed, testing with $q = 1$ and integrating by parts as in \eqref{ibp-elementwise}, we see that
\begin{equation*}
\int_\Domain \Divdisc{\DG} w
= 
\int_\Domain \DivM w 
- \int_\Skel \Jump{w} \cdot \Normal 
= 0
\end{equation*}
for all $ w \in (\Polyell{\degree})^\Dim$. This proves that $\Divdisc{\DG} w \in \PolyellAvg{\degree-1}$. Then, testing with $q \in \PolyellAvg{\degree}$, we retrieve \eqref{discrete-divergence}. 
\end{remark}

To assess the quality of the discretization \eqref{Stokes-dG}, we introduce the scalar product
\begin{equation*}
(w, v)_\DG
:= 
\int_\Domain \GradM w \colon \GradM v
+
\int_ 
\Skel \dfrac{\eta}{h} \Jump{w} \cdot \Jump{v},
\qquad
w, v \in \SobH{\Domain}^\Dim + (\Polyell{\degree})^\Dim
\end{equation*}
where the penalty parameter $\eta$ is the same as in \eqref{DG-form-a}. We measure the velocity error $u-u_\DG$ in the norm $\norm{\cdot}_\DG$ induced by $(\cdot, \cdot)_\DG$, that is an extension of the norm $\normLeb{\Grad \cdot}{\Domain}$ to $(\SobH{\Domain} + \Polyell{\degree})^\Dim$. Since $\PolyellAvg{\degree-1} \subseteq \Leb{\Domain}$, we measure the pressure error $p - p_\DG$ in the $L^2$-norm.

\begin{remark}[Notation for $\DG$ discretization]
\label{R:notation}
The label `$\DG$' identifies all objects and quantities that specifically depend on the discretization \eqref{Stokes-dG}. In most (but not all) cases, such objects and quantities depend on the penalty parameter $\eta$.
\end{remark}

In what follows, we write $C$ for a positive nondecreasing function of the shape constant $\Shape_\Mesh$ of $\Mesh$. Such function may depend also on other parameters (like $\Domain$, $\Dim$, or $\degree$) but is independent of the viscosity $\Visc$ and the penalty parameter $\eta$. Furthermore, the value of $C$ does not need to be the same at different occurrences. We sometimes abbreviate $A \leq C B$ as $A \Cleq B$. 

\subsection{Stability}
\label{SS:stability}

The so-called inverse trace inequality \cite[Lemma~1.46]{DiPietro.Ern:12} implies that there is a
constant $\overline{\eta} > 0$, depending only on the shape parameter
of $\Mesh$ and the polynomial degree \(\degree\), such that 
\begin{equation}
\label{DG-threshold}
\int_{\Skel} h \snorm{\Avg{v}}^2
\leq \overline{\eta} \normLeb{v}{\Domain}^2\qquad\forall v \in  (\Polyell{\degree})^{\Dim \times \Dim}.
\end{equation} 
Hence, simple algebraic manipulations reveal that the form $a_\DG$ is bounded and coercive. More precisely, we have
\begin{subequations}
\label{coercivity+boundedness+infsup-DG}
\begin{equation}
\label{boundedness-DG}
a_\DG(w,v) \leq 
\overline{\alpha}_\DG\norm{w}_\DG \norm{v}_\DG,
\qquad
\overline{\alpha}_\DG := 1+\sqrt{\overline{\eta}/\eta}.
\end{equation}
and 
\begin{equation}
\label{coercivity-DG}
a_\DG(w,w) \geq
\underline{\alpha}_\DG
\norm{w}_\DG^2, \qquad
\underline{\alpha}_\DG:= 1 - \sqrt{\overline{\eta} / \eta}
\end{equation}
for all $w, v \in (\Polyell{\degree})^\Dim$.
%
Furthermore, the form $b_\DG$ is inf-sup stable, in that 
\begin{equation}
\label{inf-sup-DG}
\beta_\DG  \normLeb{q}{\Domain}
\leq 
\sup_{w \in (\Polyell{\degree})^\Dim}
\dfrac{b_\DG(w, q)}{\norm{w}_\DG},
\qquad \beta_\DG^{-1} \Cleq \max\{1, \sqrt{\eta}\}
\end{equation}	
for all $q \in \PolyellAvg{\degree-1}$, see e.g. \cite[section~4.4]{John.Linke.Merdon.Neilan.Rebholz:17}. Note that, without loss of generality, we can assume $\beta_\DG \leq 1$. 
\end{subequations}

The following discrete counterpart of \eqref{stability-velocity+pressure} follows from \eqref{coercivity+boundedness+infsup-DG} and the theory of saddle point problems. The discrete stability constant involves, in particular, the operator norm of $\Smt_\DG$
\begin{equation*}
\norm{\Smt_\DG} := \opnorm{\Smt_\DG}{(\Polyell{\degree})^\Dim}{\SobH{\Domain}^\Dim}.
\end{equation*}
\begin{lemma}[Discrete well-posedness and stability]
\label{L:stability-velocity+pressure-DG}
Let $\overline{\eta} > 0$ be as in \eqref{DG-threshold} and assume
$\eta > \overline{\eta}$. The discretization \eqref{Stokes-dG}, with
viscosity $\Visc > 0$ and load $f \in \SobHD{\Domain}$, is uniquely
solvable and its solution $(u_\DG, p_\DG)$ satisfies the a priori
estimate
\begin{equation*}
\Visc \norm{u_\DG}_\DG 
\leq 
\dfrac{1}{\underline{\alpha}_\DG} \norm{\Smt_\DG}
 \norm{f}_{\SobHD{\Domain}}
\quad \text{and} \quad
\normLeb{p_\DG}{\Domain}
\leq 
\dfrac{2\overline{\alpha}_\DG}{\underline{\alpha}_\DG\beta_\DG} \norm{\Smt_\DG}
 \norm{f}_{\SobHD{\Domain}}.
\end{equation*}
\end{lemma}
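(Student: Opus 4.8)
The plan is to read \eqref{Stokes-dG} as a square, finite-dimensional saddle point problem and to verify the two structural hypotheses of the standard Brezzi theory, which have in fact already been collected in \eqref{coercivity+boundedness+infsup-DG}. The hypothesis $\eta > \overline{\eta}$ is precisely what renders $\underline{\alpha}_\DG = 1 - \sqrt{\overline{\eta}/\eta}$ strictly positive, so that \eqref{coercivity-DG} supplies the coercivity of $a_\DG$, while \eqref{inf-sup-DG} supplies the inf-sup stability of $b_\DG$. Since the linear system is square, I would reduce well-posedness to uniqueness: the difference of any two solutions solves \eqref{Stokes-dG} with the right-hand side replaced by $0$, and the a priori estimates derived below --- whose proof never presupposes uniqueness --- then force this difference to vanish, because $\norm{\cdot}_\DG$ is a genuine norm on $(\Polyell{\degree})^\Dim$ as soon as $\eta > \overline{\eta}$ and $\normLeb{\cdot}{\Domain}$ is a norm on $\PolyellAvg{\degree-1}$. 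Existence follows automatically in finite dimensions.

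For the velocity estimate I would test the first equation of \eqref{Stokes-dG} with $v = u_\DG$. Since the second equation holds for every $q \in \PolyellAvg{\degree-1}$, in particular for $q = p_\DG$ (equivalently, since $u_\DG \in Z_\DG$), the coupling term vanishes, $b_\DG(u_\DG, p_\DG) = 0$, leaving $\Visc\, a_\DG(u_\DG, u_\DG) = \langle f, \Smt_\DG u_\DG \rangle$. Coercivity \eqref{coercivity-DG} bounds the left-hand side below by $\Visc\, \underline{\alpha}_\DG \norm{u_\DG}_\DG^2$, whereas the right-hand side is at most $\norm{f}_{\SobHD{\Domain}} \norm{\Smt_\DG} \norm{u_\DG}_\DG$ by the definitions of the dual norm and of the operator norm $\norm{\Smt_\DG}$. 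Dividing by $\norm{u_\DG}_\DG$ gives the first claimed bound.

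For the pressure estimate I would isolate $b_\DG(v, p_\DG) = \langle f, \Smt_\DG v \rangle - \Visc\, a_\DG(u_\DG, v)$ from the first equation and insert it into the inf-sup condition \eqref{inf-sup-DG}. Estimating the two terms by the operator norm of $\Smt_\DG$ and by the boundedness \eqref{boundedness-DG} of $a_\DG$ combined with the velocity bound just obtained, I would arrive at $\beta_\DG \normLeb{p_\DG}{\Domain} \leq (1 + \overline{\alpha}_\DG/\underline{\alpha}_\DG) \norm{\Smt_\DG} \norm{f}_{\SobHD{\Domain}}$.

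The nearest thing to an obstacle is the concluding constant bookkeeping: as $\overline{\alpha}_\DG = 1 + \sqrt{\overline{\eta}/\eta} \geq 1 \geq 1 - \sqrt{\overline{\eta}/\eta} = \underline{\alpha}_\DG$, one has $\overline{\alpha}_\DG/\underline{\alpha}_\DG \geq 1$, hence $1 + \overline{\alpha}_\DG/\underline{\alpha}_\DG \leq 2\overline{\alpha}_\DG/\underline{\alpha}_\DG$; dividing by $\beta_\DG$ turns the previous inequality into the stated one. Beyond this bookkeeping the argument is the routine saddle point stability estimate, the substantive work having already gone into \eqref{coercivity+boundedness+infsup-DG}.
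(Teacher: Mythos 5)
Your argument is correct and is essentially the paper's: the paper verifies that $\Smt_\DG$ is bounded and then delegates the stability estimates to the standard saddle-point theory (citing \cite[Theorem~4.2.3]{Boffi:Brezzi:Fortin.13} on the basis of \eqref{coercivity+boundedness+infsup-DG}), whereas you simply unfold that standard argument by hand --- testing with $v=u_\DG$, using $b_\DG(u_\DG,p_\DG)=0$, coercivity, and the inf-sup condition --- and arrive at exactly the same constants, including the bound $1+\overline{\alpha}_\DG/\underline{\alpha}_\DG\le 2\overline{\alpha}_\DG/\underline{\alpha}_\DG$. The finite-dimensional uniqueness-implies-existence reduction is also sound, so nothing is missing.
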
 

\begin{proof}
Since $(\Polyell{\degree})^\Dim$ is finite dimensional, the operator $\Smt_\DG$ is bounded. This implies that the adjoint operator $\Smt_\DG^\star$ is well-defined and that the load in the first equation of \eqref{Stokes-dG} is $\Smt_\DG^\star f$. Then \cite[Theorem~4.2.3]{Boffi:Brezzi:Fortin.13} implies that \eqref{Stokes-dG} is uniquely solvable, as a consequence of \eqref{coercivity+boundedness+infsup-DG}, and yields the a priori estimates
\begin{equation*}
\Visc \norm{u_\DG}_\DG 
\leq 
\dfrac{1}{\underline{\alpha}_\DG} \norm{\Smt_\DG^\star f}
\qquad \text{and} \qquad
\normLeb{p_\DG}{\Domain}
\leq 
\dfrac{2\overline{\alpha}_\DG}{\underline{\alpha}_\DG\beta_\DG} 
\norm{\Smt_\DG^\star f}
\end{equation*}
where $\norm{\Smt_\DG^\star f}$ is the norm of the functional
$\Smt_\DG^\star f$ in the dual space of $(\Polyell{\degree})^\Dim$. We
conclude by recalling that the operator norm $\Smt_\DG^\star$ coincides
with the one of $\Smt_\DG$, see \cite[Remark~2.16]{Brezis:11}. 
\end{proof}

A discrete counterpart of \eqref{stability-velocity} additionally holds, under the assumption that $\Smt_\DG$ maps discretely divergence-free functions into exactly divergence-free functions. To our best knowledge, the importance of this condition was first pointed out in \cite{Linke:14}. 
\begin{lemma}[Stability of the discrete velocity]
\label{L:stability-velocity-DG}
Under the assumptions of Lemma~\ref{L:stability-velocity+pressure-DG},
for any load \(f\in \SobHD{\Domain} \)
the discrete velocity $u_\DG \in (\Polyell{\degree})^\Dim$ in
\eqref{Stokes-dG} additionally enjoys the a priori estimate 
\begin{equation}
\label{stability-velocity-DG}
\Visc \norm{u_\DG}_\DG \leq 
\dfrac{\norm{\Smt_\DG}}{\underline{\alpha}_\DG} \norm{f_{|Z}}_{Z'}
\end{equation} 
if and only if 
\begin{equation}
\label{kernel-inclusion}
\Smt_\DG(Z_\DG) \subseteq Z.
\end{equation}
\end{lemma}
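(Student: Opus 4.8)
The plan is to prove the two implications separately, both built on the elementary remark that $u_\DG$ solves a coercive problem posed over the discrete kernel $Z_\DG$. Since $u_\DG \in Z_\DG$ by \eqref{kernel-DG}, I would test the first equation of \eqref{Stokes-dG} with an arbitrary $v \in Z_\DG$; the pressure contribution then vanishes, because \eqref{discrete-divergence} gives $b_\DG(v, p_\DG) = -\int_\Domain p_\DG \Divdisc{\DG} v = 0$ whenever $\Divdisc{\DG} v = 0$. This leaves the reduced identity
\[
\Visc\, a_\DG(u_\DG, v) = \left\langle f, \Smt_\DG v \right\rangle \qquad \text{for all } v \in Z_\DG,
\]
which, together with the coercivity \eqref{coercivity-DG} (effective because $\eta > \overline{\eta}$), is the only ingredient I would need from the discrete problem.

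For the implication \eqref{kernel-inclusion} $\Rightarrow$ \eqref{stability-velocity-DG}, I would test the reduced identity with $v = u_\DG$ and use \eqref{coercivity-DG} to obtain $\Visc\, \underline{\alpha}_\DG \norm{u_\DG}_\DG^2 \le \left\langle f, \Smt_\DG u_\DG \right\rangle$. This is where \eqref{kernel-inclusion} is used: since $u_\DG \in Z_\DG$, the hypothesis guarantees $\Smt_\DG u_\DG \in Z$, so the definition of $\norm{f_{|Z}}_{Z'}$ in \eqref{stability-velocity} yields $\left\langle f, \Smt_\DG u_\DG \right\rangle \le \norm{f_{|Z}}_{Z'}\, \normLeb{\Grad \Smt_\DG u_\DG}{\Domain}$. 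Bounding $\normLeb{\Grad \Smt_\DG u_\DG}{\Domain} \le \norm{\Smt_\DG}\, \norm{u_\DG}_\DG$ through the operator norm and dividing by $\underline{\alpha}_\DG \norm{u_\DG}_\DG$ (the case $u_\DG = 0$ being trivial) gives exactly \eqref{stability-velocity-DG}.

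For the converse I would argue by contraposition, constructing a load that violates \eqref{stability-velocity-DG} whenever \eqref{kernel-inclusion} fails. If \eqref{kernel-inclusion} fails, there is $w_0 \in Z_\DG$ with $\Smt_\DG w_0 \notin Z$; since $\Smt_\DG w_0 \in \SobH{\Domain}^\Dim$ always, this forces $\Div \Smt_\DG w_0 \ne 0$ in $\Leb{\Domain}$. I would then set $\phi := \Div \Smt_\DG w_0$, which lies in $\LebH{\Domain}$ because the boundary trace of $\Smt_\DG w_0$ vanishes, and define $f = \Grad \phi \in \SobHD{\Domain}$ by $\left\langle f, v \right\rangle := -\int_\Domain \phi \Div v$. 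As $\Div z = 0$ for every $z \in Z$, this load obeys $\norm{f_{|Z}}_{Z'} = 0$, so \eqref{stability-velocity-DG} would force $u_\DG = 0$. On the other hand, testing the reduced identity with $v = w_0$ gives $\Visc\, a_\DG(u_\DG, w_0) = -\int_\Domain \phi \Div \Smt_\DG w_0 = -\normLeb{\Div \Smt_\DG w_0}{\Domain}^2 < 0$, which is impossible if $u_\DG = 0$. The resulting contradiction shows that \eqref{stability-velocity-DG} cannot hold for all loads unless \eqref{kernel-inclusion} does.

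The saddle-point reduction and the operator-norm bookkeeping in the first implication are routine. I expect the genuine difficulty to lie in the converse: the main obstacle is to exhibit a load that is simultaneously invisible to $Z$, so that $\norm{f_{|Z}}_{Z'} = 0$, and yet sensitive to the spurious divergence of $\Smt_\DG w_0$. The gradient-type load $f = \Grad \phi$ with $\phi = \Div \Smt_\DG w_0$ meets both requirements at once, and the remaining checks, namely that $f$ is an admissible element of $\SobHD{\Domain}$ and that the associated $u_\DG$ is nonzero, are immediate from the computation above.
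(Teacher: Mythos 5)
Your proposal is correct and follows essentially the same route as the paper: the forward implication is identical (restrict to $Z_\DG$, take $v=u_\DG$, use coercivity, the inclusion $\Smt_\DG u_\DG\in Z$ and the operator norm of $\Smt_\DG$), and the converse uses the same counterexample load $f=\Grad(\Div\Smt_\DG w_0)$, which vanishes on $Z$ yet pairs nontrivially with $\Smt_\DG w_0$. The only cosmetic differences are that you phrase the converse as a contraposition and conclude via $a_\DG(0,w_0)=0$ rather than via the boundedness of $a_\DG$ as the paper does; both yield the same contradiction.
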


\begin{proof}
Assume first that \eqref{kernel-inclusion} holds. Testing the first equation of \eqref{Stokes-dG} with the elements of $Z_\DG$, we see that $u_\DG$ solves the reduced problem
\begin{equation*}
\forall v \in Z_\DG \qquad
\Visc \, a_\DG(u_\DG, v) = \left\langle f, \Smt_\DG v\right\rangle. 
\end{equation*}	
In view of the inclusion \eqref{kernel-DG}, we are allowed to set $v = u_\DG$ and exploit the coercivity \eqref{coercivity-DG} of $a_\DG$
\begin{equation*}
\Visc \underline{\alpha}_\DG\norm{u_\DG}_\DG^2
\leq
\left\langle f, \Smt_\DG u_\DG\right\rangle.  
\end{equation*} 
Then, the inclusion $\Smt_\DG u_\DG \in Z$ implies
\begin{equation*}
\left\langle f, \Smt_\DG u_\DG\right\rangle
\leq
\norm{f_{|Z}}_{Z'} \norm{\Smt_\DG u_\DG}. 
\end{equation*}
We derive the claimed a priori estimate in view of the boundedness of
$\Smt_\DG$.

Conversely, assume \eqref{stability-velocity-DG} holds and
there is $v \in Z_\DG$ such that $\Div \Smt_\DG v \neq 0$. Set $f :=
\Grad(\Div \Smt_\DG v) \in \SobHD{\Domain}$. On the one hand, we have
$f_{|Z} = 0$, so that \eqref{stability-velocity-DG} implies $u_\DG =
0$. On the other hand, the boundedness of $a_\DG$ and the first equation of \eqref{Stokes-dG} reveal that $\Visc \overline{\alpha}_\DG \norm{u_\DG}_\DG \norm{v}_\DG \geq
\left\langle f, \Smt_\DG v\right\rangle = \normLeb{\Div \Smt_\DG
  v}{\Domain}^2 \neq 0$. This contradiction confirms that $\Smt_\DG$
maps $Z_\DG$ into $Z$ whenever \eqref{stability-velocity-DG} holds. 
\end{proof}

\begin{remark}[Pressure robustness]
\label{R:pressure-robustness}

The a priori estimate \eqref{stability-velocity} reveals that the velocity $u$ in the Stokes equations \eqref{Stokes} solely depends on $f_{|Z}$. In particular, this entails that $u$ is invariant with respect to irrotational perturbations of $f$, which only affect the pressure $p$, see Linke \cite{Linke:14}. Whenever the estimate \eqref{stability-velocity-DG} holds, the discretization \eqref{Stokes-dG} reproduces such invariant property and we call it `pressure robust'. We refer to
\cite{John.Linke.Merdon.Neilan.Rebholz:17} and to the references therein for an extensive discussion on the importance of pressure robustness in the discretization of the (Navier-)Stokes equations.  
\end{remark}

\subsection{Quasi-optimality}
\label{SS:quasi-optimality}

We now look for conditions ensuring that the discretization \eqref{Stokes-dG} enjoys \eqref{qo}. To this end, we first investigate the approximation of the velocity field $u$ in \eqref{Stokes} by $Z_\DG$, i.e. by discretely divergence-free velocity fields. This is a standard question motivated by the inclusion \eqref{kernel-DG} and several related results are available in the literature. We refer to \cite[Theorem~12.5.17]{Brenner.Scott:08} for conforming discretizations and to \cite[Lemma~8.1]{Toselli:02} for dG discretizations.  

\begin{lemma}[Approximation by $Z_\DG$]
\label{L:discretely-divfree-approx}
Let $u \in \SobH{\Domain}^\Dim$ be the velocity solving \eqref{Stokes}. Then, there is a constant $\delta_\DG$ such that
\begin{equation*}
\label{discretely-divfree-approx}
\inf_{z \in Z_\DG} \norm{u-z}_\DG
\leq \delta_\DG
\inf_{w \in (\Polyell{\degree})^\Dim} \norm{u-w}_\DG.
\end{equation*}
Moreover, it holds $\delta_\DG \leq 1 + C \beta_\DG^{-1}$.
\end{lemma}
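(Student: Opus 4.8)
The plan is to start from an arbitrary near-best approximation $w \in (\Polyell{\degree})^\Dim$ of $u$ and to construct a divergence correction $\phi$ so that $z := w + \phi$ lies in $Z_\DG$ while remaining close to $u$. The correction is supplied by the inf-sup stability \eqref{inf-sup-DG} of $b_\DG$, which I will use in its equivalent guise as surjectivity of $\Divdisc{\DG}\colon (\Polyell{\degree})^\Dim \to \PolyellAvg{\degree-1}$ together with a right inverse of operator norm at most $\beta_\DG^{-1}$.

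First I would record the key identity coming from $u \in Z$, namely $b_\DG(u, q) = 0$ for every $q \in \PolyellAvg{\degree-1}$: since $\DivM u = \Div u = 0$ and $u \in \SobH{\Domain}^\Dim$ has vanishing jumps $\Jump{u} = 0$, both terms in \eqref{DG-form-b} vanish. Combined with the definition \eqref{discrete-divergence} of the discrete divergence, this yields, for every $w$, the relation $\int_\Domain q \Divdisc{\DG} w = -b_\DG(w, q) = -b_\DG(w - u, q)$, which is precisely what lets the discrete divergence of $w$ be measured by the error $u - w$.

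Next I would verify that $b_\DG$ is bounded uniformly in $\eta$, i.e. $b_\DG(v, q) \Cleq \norm{v}_\DG \normLeb{q}{\Domain}$ for $v \in \SobH{\Domain}^\Dim + (\Polyell{\degree})^\Dim$ and $q \in \PolyellAvg{\degree-1}$. The volume term is controlled through $\normLeb{\DivM v}{\Domain} \leq \sqrt{\Dim}\,\norm{v}_\DG$, while the skeleton term $\int_\Skel \Jump{v} \cdot \Normal \Avg{q}$ is handled by Cauchy-Schwarz against the penalty weight and the inverse trace inequality \eqref{DG-threshold}, the assumption $\eta > \overline{\eta}$ ensuring that the factor $\sqrt{\overline{\eta}/\eta} < 1$ is absorbed into a shape-regularity constant $C$. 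Applying this to $v = u - w$ and using the identity above, I obtain $\normLeb{\Divdisc{\DG} w}{\Domain} = \sup_{q} \frac{b_\DG(u - w, q)}{\normLeb{q}{\Domain}} \Cleq \norm{u - w}_\DG$.

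Finally, the inf-sup condition \eqref{inf-sup-DG} furnishes $\phi \in (\Polyell{\degree})^\Dim$ solving $\Divdisc{\DG}\phi = -\Divdisc{\DG} w$ with $\norm{\phi}_\DG \leq \beta_\DG^{-1} \normLeb{\Divdisc{\DG} w}{\Domain}$; then $z := w + \phi$ satisfies $\Divdisc{\DG} z = 0$, hence $z \in Z_\DG$, and the triangle inequality gives $\norm{u - z}_\DG \leq \norm{u - w}_\DG + \norm{\phi}_\DG \leq (1 + C\beta_\DG^{-1})\norm{u - w}_\DG$. Taking the infimum over $w$ closes the argument and delivers the stated bound on $\delta_\DG$. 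The step I expect to require the most care is the passage from \eqref{inf-sup-DG} to a right inverse of $\Divdisc{\DG}$ with operator norm $\beta_\DG^{-1}$: this is the standard closed-range/surjectivity argument, but one must check that the $L^2$ duality identifying $\PolyellAvg{\degree-1}$ with its dual is exactly the one built into \eqref{discrete-divergence}, so that the constant $\beta_\DG^{-1}$ transfers without loss.
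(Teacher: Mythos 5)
Your proposal is correct and follows essentially the same route as the paper: both construct the discretely divergence-free competitor by adding to $w$ a correction whose $\norm{\cdot}_\DG$-norm is controlled by $\beta_\DG^{-1}$ times a quantity bounded by $C\norm{u-w}_\DG$ (using that $u$ is divergence-free with vanishing jumps, together with the inverse trace inequality), and conclude by the triangle inequality. The only cosmetic difference is that the paper obtains the correction via the Banach--Ne\u{c}as theorem on the orthogonal complement $Z_\DG^\perp$ applied to $b_\DG(w,\cdot)$, whereas you phrase the same step as a right inverse of $\Divdisc{\DG}$ with operator norm $\beta_\DG^{-1}$.
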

\begin{proof}
Let $w \in (\Polyell{\degree})^\Dim$ be given and denote by
$Z_\DG^\perp$ the orthogonal complement of $Z_\DG$ with respect to the
scalar product $(\cdot, \cdot)_\DG$. Inequality
\eqref{inf-sup-DG} implies that $Z_\DG^\perp$ and
$\PolyellAvg{\degree-1}$ have the same space dimension and  
\begin{equation*}
\beta_\DG \normLeb{q}{\Domain}
\leq 
\sup_{\widetilde{w} \in Z_\DG^\perp}
\dfrac{b_\DG(\widetilde{w}, q)}{\norm{\widetilde{w}}_\DG},
\end{equation*}
cf. \cite[Chapter~12.5]{Brenner.Scott:08}. Then, the Banach-Ne\u{c}as theorem (see, e.g., \cite[Theorem~2.6]{Ern.Guermond:04}) ensures the existence of a unique solution $\widetilde{w} \in Z_\DG^\perp$ to the problem 
\begin{equation*}
\forall q \in \PolyellAvg{\degree-1}
\qquad
b_\DG(\widetilde{w}, q) 
=
b_\DG(w, q)
\end{equation*}
together with the estimate 
\begin{equation*}
\beta_\DG\norm{\widetilde{w}}_\DG \leq 
\norm{b_\DG(w, \cdot)}_{(\PolyellAvg{\degree-1})'}.
\end{equation*} 
Recall that $u$ is in $\SobH{\Domain}$ and divergence-free, in view of the second equation of \eqref{Stokes}. Hence, for all $q \in \PolyellAvg{\degree-1}$, we have
\begin{equation*}
b_\DG(w, q) = \int_\Domain \DivM(u-w)q - \int_\Skel \Jump{u-w} \cdot \Normal \Avg{q} \Cleq \norm{u-w}_\DG \normLeb{q}{\Domain}
\end{equation*}
where we have used the inverse trace inequality \eqref{DG-threshold} for the term involving $\Avg{q}$. This estimate and the previous one entail that $\beta_\DG\norm{\widetilde{w}}_\DG \leq C \norm{u-w}_\DG$. Next, we set $z := w - \widetilde{w}$. By definition, we have $b_\DG(z, \cdot) = 0$, showing that $z \in Z_\DG$. Moreover, it holds
\begin{equation*}
\norm{u-z}_\DG
\leq
\norm{u-w}_\DG + \norm{\widetilde{w}}_\DG
\leq (1 + C\beta_\DG^{-1})
\norm{u-w}_\DG.
\end{equation*}
We conclude taking the infimum over all $w \in (\Polyell{\degree})^\Dim$. 
\end{proof}

\begin{remark}[Size of $\delta_\DG$]
\label{L:size-C}
The bound of $\delta_\DG$ in the above lemma is known to be potentially pessimistic if $\beta_\DG$ is close to zero as, for instance, in channel-like stretched domains. A sharper bound of $\delta_\DG$ could be obtained in terms of the norm of a Fortin operator by arguing in the spirit of \cite[Remark~4.1]{John.Linke.Merdon.Neilan.Rebholz:17}.
\end{remark}

After this preparation, we observe that the dG discretization \eqref{Stokes-dG} fits into the abstract framework of \cite[section~2]{Kreuzer.Zanotti:19}. Hence, applying \cite[Lemma~2.6]{Kreuzer.Zanotti:19}, we derive that the following consistency conditions are necessary for quasi-optimality \eqref{qo}
\begin{subequations}
	\label{quasi-optimal-nec}
	\begin{alignat}{2}
	\label{quasi-optimal-nec-lapl}
	&\forall w \in Z \cap Z_\DG, \, v \in (\Polyell{\degree})^\Dim &\qquad&
	a_\DG(w, v) = \int_\Domain \Grad w \colon \Grad \Smt_\DG v\\
	\label{quasi-optimal-nec-div}
	&\forall w \in (\Polyell{\degree})^\Dim, \, q \in \PolyellAvg{\degree-1} &\qquad
	&b_\DG(w, q) = -\int_\Domain q \Div \Smt_\DG w.
	\end{alignat}
\end{subequations}
Differently from \cite{Kreuzer.Zanotti:19}, we deal with these
conditions assuming that $\Smt_\DG$ preserves sufficiently many
moments of $v$ on the $\Dim$-simplices and on the faces of $\Mesh$, in
the vein of \cite[section~3.2]{Veeser.Zanotti:18b}. 
\begin{lemma}[Consistency by moment-preserving operators]
\label{L:consistency-qopt}
Assume that the operator $\Smt_\DG: (\Polyell{\degree})^\Dim \to \SobH{\Domain}^\Dim$ is such that 
\begin{subequations}
	\label{consistency-qopt-conditions}
	\begin{align}
	\label{consistency-qopt-faces}
	\forall F \in \FacesMint, \; m_F \in \Poly{\degree-1}(F)^\Dim \quad
	&\int_F \Smt_\DG v \cdot m_F
	=
	\int_F \Avg{v} \cdot m_F\\
	\label{consistency-qopt-simplices}
	\forall K \in \Mesh,\: m_K \in \Poly{\degree-2} (K)^\Dim
	\quad
	&\int_K \Smt_\DG v \cdot m_K
	=
	\int_K v \cdot m_K
	\end{align}
\end{subequations}
for all $v \in (\Polyell{\degree})^\Dim$. Then, $\Smt_\DG$ satisfies conditions \eqref{quasi-optimal-nec-lapl} and \eqref{quasi-optimal-nec-div}.
\end{lemma}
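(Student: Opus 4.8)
The plan is to establish \eqref{quasi-optimal-nec-div} and \eqref{quasi-optimal-nec-lapl} one at a time. In both cases I exploit that $\Smt_\DG v \in \SobH{\Domain}^\Dim$ is single-valued across faces and vanishes on $\partial\Domain$, integrate by parts to expose volume integrands of degree $\leq \degree-2$ on each $K \in \Mesh$ and face integrands of degree $\leq \degree-1$ on each $F \in \FacesMint$, and then invoke \eqref{consistency-qopt-simplices} and \eqref{consistency-qopt-faces} to replace $\Smt_\DG v$ by $v$, respectively by $\Avg{v}$. The whole argument is arranged so that these polynomial degrees match the orders in \eqref{consistency-qopt-conditions} exactly.

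For the divergence identity I would start from $-\int_\Domain q \Div \Smt_\DG w$ and apply \eqref{ibp-elementwise} to $\Smt_\DG w$. Its jump vanishes, so the middle skeleton term disappears and only the volume term $-\int_\Domain \Smt_\DG w \cdot \GradM q$ and the interior-face term $\int_{\Skel \setminus \partial\Domain} \Avg{\Smt_\DG w} \cdot \Normal\,\Jump{q}$ survive. Since $q_{|K} \in \Poly{\degree-1}(K)$ forces $\GradM q_{|K} \in \Poly{\degree-2}(K)^\Dim$, condition \eqref{consistency-qopt-simplices} turns the volume term into $-\int_\Domain w \cdot \GradM q$; since $\Normal$ is constant on each face and $\Jump{q}_{|F} \in \Poly{\degree-1}(F)$, the vector $\Normal\,\Jump{q}$ is an admissible moment and \eqref{consistency-qopt-faces} turns the face term into $\int_{\Skel\setminus\partial\Domain}\Avg{w}\cdot\Normal\,\Jump{q}$. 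Applying \eqref{ibp-elementwise} a second time, now to $w$ itself, and comparing with the definition \eqref{DG-form-b} of $b_\DG$, I recover exactly this expression, which proves \eqref{quasi-optimal-nec-div}.

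For the Laplacian identity the key preliminary observation is that $w \in Z \cap Z_\DG$ lies in $\SobH{\Domain}^\Dim \cap (\Polyell{\degree})^\Dim$, hence is a continuous piecewise polynomial vanishing on $\partial\Domain$; thus $\Jump{w}=0$ and the last two terms of $a_\DG$ in \eqref{DG-form-a} drop out. I would then integrate both $\int_\Domain \GradM w \colon \GradM v$ and the target $\int_\Domain \Grad w \colon \Grad \Smt_\DG v$ by parts element by element. On the right the boundary contributions vanish because $\Smt_\DG v = 0$ on $\partial\Domain$; on the left they cancel against the remaining $-\int_\Skel \Avg{\Grad w}\,\Normal \cdot \Jump{v}$ term of $a_\DG$. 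Each side thereby reduces to the same skeleton pattern, namely $-\int_\Domain \Delta_\Mesh w \cdot(\,\cdot\,) + \int_{\Skel\setminus\partial\Domain}\Jump{\Grad w}\,\Normal\cdot(\,\cdot\,)$. Because $\Delta_\Mesh w_{|K} \in \Poly{\degree-2}(K)^\Dim$ and $\Jump{\Grad w}\,\Normal_{|F} \in \Poly{\degree-1}(F)^\Dim$, conditions \eqref{consistency-qopt-simplices} and \eqref{consistency-qopt-faces} replace $\Smt_\DG v$ by $v$ in the volume term and by $\Avg{v}$ on the interior faces, which is precisely the reduced form of $a_\DG(w,v)$.

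I expect the main obstacle to be purely the bookkeeping of the integration by parts: splitting $\sum_{K}\int_{\partial K}$ into averages and jumps over $\Skel$ with a consistent orientation of $\Normal$, and checking that interior faces produce the pairing $\Jump{\Grad w}\,\Normal\cdot\Avg{v}$ while boundary faces cancel term by term. There is no analytic difficulty: once the degrees are seen to line up with the orders $\degree-2$ on elements and $\degree-1$ on faces prescribed in \eqref{consistency-qopt-conditions}, each substitution is immediate. I note in passing that only the continuity and boundary vanishing of $w$ enter the second argument, so the full strength of $w\in Z\cap Z_\DG$ is not needed there.
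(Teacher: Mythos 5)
Your argument is correct and follows essentially the same route as the paper's proof: element-wise integration by parts exposes volume moments of degree $\le \degree-2$ and face moments of degree $\le \degree-1$, after which \eqref{consistency-qopt-simplices} and \eqref{consistency-qopt-faces} let you swap $\Smt_\DG v$ for $v$ (resp.\ $\Avg{v}$); the paper merely packages the Laplacian part as the identity \eqref{consistency-qopt-velocity}, valid for all $w,v$, before setting $\Jump{w}=0$. The only blemish is a consistent sign slip in your divergence computation (the displayed terms are the expansion of $+\int_\Domain q\,\Div \Smt_\DG w$ rather than of $-\int_\Domain q\,\Div \Smt_\DG w$), which does not affect the validity of the strategy.
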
 
\begin{proof}
Let $w \in (\Polyell{\degree})^\Dim$ and $q \in
\PolyellAvg{\degree-1}$. The integration by parts
formula \eqref{ibp-elementwise} yields 
\begin{equation}\label{eq:div_int_by_parts}
b_\DG(w, q)
=
\int_\Domain w \cdot \GradM q
- \int_{\Skel \setminus \partial \Domain} \Avg{w} \cdot \Normal \Jump{q}.
\end{equation}
In view of \eqref{consistency-qopt-conditions}, we can replace $w$ by $\Smt_\DG w$ in this identity. Then, we integrate back by parts and note that $\Jump{\Smt_\DG w} = 0$ on $\Skel
$, because of the inclusion $\Smt_\DG w \in \SobH{\Domain}^\Dim$,
\begin{equation*}
b_\DG(w, q)
=
\int_\Domain \Smt_\DG w \cdot \GradM q
- \int_{\Skel \setminus \partial \Domain} \Smt_\DG w \cdot \Normal \Jump{q}
=
-\int_\Domain q \Div \Smt_\DG w.
\end{equation*} 
This entails that \eqref{quasi-optimal-nec-div} holds. Arguing similarly, we infer that  
\begin{equation}
\label{consistency-qopt-velocity}
a_\DG(w,v) = \int_\Domain \GradM w \colon \Grad \Smt_\DG v
- \int_\Skel \Jump{w} \cdot \Avg{\Grad v} \Normal
+ \int_\Skel \dfrac{\eta}{h} \Jump{w} \cdot \Jump{v} 
\end{equation}
for all $w, v \in (\Polyell{\degree})^\Dim$, cf. \cite[Lemma~3.1]{Veeser.Zanotti:18b}. Hence, we conclude that \eqref{quasi-optimal-nec-lapl} holds, because $\Jump{w} = 0$ on $\Skel$ in view of the inclusion $w \in Z$. 
\end{proof}

\begin{remark}[Alternative approach to consistency]
\label{R:alternative-consistency}
The implication \eqref{consistency-qopt-conditions} $\implies$ \eqref{quasi-optimal-nec} stated in the previous lemma relies on our choice of the forms $a_\DG$ and $b_\DG$ and fails to hold for different discretizations. Roughly speaking, this happens whenever some sort of reduced integration of the divergence is involved. In such cases the consistency conditions \eqref{quasi-optimal-nec} need to be accommodated differently, for instance by the augmented Lagrangian formulation proposed in \cite{Kreuzer.Zanotti:19}.  
\end{remark}

The next theorem states that \eqref{consistency-qopt-conditions} is indeed a sufficient condition for quasi-optimality. The essence of this result and a partial proof can be found also in \cite[section~6]{Badia.Codina.Gudi.Guzman:14}.  
\begin{theorem}[Quasi-optimality]
\label{T:quasi-optimality}
Assume that the operator $\Smt_\DG$ satisfies \eqref{consistency-qopt-conditions} for all $v \in (\Polyell{\degree})^\Dim$. Moreover, let $\eta > \overline{\eta}$, where $\overline{\eta}$ is as in \eqref{DG-threshold}. Then, denoting by $(u,p)$ and $(u_\DG, p_\DG)$ the solutions of \eqref{Stokes} and \eqref{Stokes-dG}, respectively, with viscosity $\Visc > 0$ and load $f \in \SobHD{\Domain}$, we have
\begin{equation*}
\label{quasi-optimality-velocity}
\Visc \norm{u-u_\DG}_\DG
\Cleq 
\dfrac{1+\norm{E_\DG}}{\underline{\alpha}_\DG}
\left( 
\delta_\DG \Visc 
\inf_{w \in (\Polyell{\degree})^\Dim}
\norm{u-w}_{\DG}
+
\inf_{q \in \PolyellAvg{\degree-1}}
\normLeb{p-q}{\Domain}
\right) 
\end{equation*}
and
\begin{equation*}
\label{quasi-optimality-pressure}
\normLeb{p-p_\DG}{\Domain}
\Cleq
\dfrac{(1+\norm{\Smt_\DG})^2}{\underline{\alpha}_\DG\beta_\DG}
\left(  
\delta_\DG \Visc 
\inf_{w \in (\Polyell{\degree})^\Dim}
\norm{u-w}_{\DG}
+
\inf_{q \in \PolyellAvg{\degree-1}}
\normLeb{p-q}{\Domain}
\right) . 
\end{equation*}
\end{theorem}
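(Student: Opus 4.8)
The plan is to derive both estimates from a single identity that couples the discrete problem \eqref{Stokes-dG} with the continuous weak formulation \eqref{Stokes}. Since $\Smt_\DG v \in \SobH{\Domain}^\Dim$ is a conforming, boundary-vanishing test function, testing \eqref{Stokes} with it gives $\left\langle f, \Smt_\DG v\right\rangle = \Visc\int_\Domain\Grad u\colon\Grad\Smt_\DG v - \int_\Domain p\,\Div\Smt_\DG v$; subtracting this from the first equation of \eqref{Stokes-dG} yields
\[
\Visc\, a_\DG(u_\DG, v) + b_\DG(v, p_\DG) = \Visc\int_\Domain\Grad u\colon\Grad\Smt_\DG v - \int_\Domain p\,\Div\Smt_\DG v
\]
for every $v\in(\Polyell{\degree})^\Dim$. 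The moment-preserving hypotheses \eqref{consistency-qopt-conditions} enter through Lemma~\ref{L:consistency-qopt}: I would rewrite $a_\DG(\cdot, v)$ by means of identity \eqref{consistency-qopt-velocity} and $b_\DG(\cdot, q)$ by means of \eqref{quasi-optimal-nec-div}, so that the right-hand side above can be matched term by term against $a_\DG$ and $b_\DG$ evaluated at the exact solution, the only surviving skeleton contributions being jumps and averaged gradients.

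For the velocity estimate I would fix an arbitrary $z\in Z_\DG$ and bound $\norm{z-u_\DG}_\DG$ via coercivity \eqref{coercivity-DG}, testing with $v=z-u_\DG\in Z_\DG$. Because $v\in Z_\DG$ one has $b_\DG(v, p_\DG)=0$ and, by \eqref{quasi-optimal-nec-div}, also $\int_\Domain q\,\Div\Smt_\DG v=0$ for every $q\in\PolyellAvg{\degree-1}$, which allows replacing the exact pressure $p$ by its best discrete approximation $q$ at no cost. Combining the displayed identity with \eqref{consistency-qopt-velocity} and using $\Jump{u}=0$, the quantity $\Visc\,a_\DG(z-u_\DG, v)$ is expressed through $\GradM(z-u)$, the jumps $\Jump{z-u}$, and the term $\int_\Domain(p-q)\,\Div\Smt_\DG v$. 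Each is controlled by Cauchy--Schwarz, the inverse trace inequality \eqref{DG-threshold} for the $\Avg{\Grad v}$ contribution, the bound $\normLeb{\Div\Smt_\DG v}{\Domain}\Cleq\norm{\Smt_\DG}\norm{v}_\DG$, and $\eta>\overline{\eta}$, producing $\Visc\norm{z-u_\DG}_\DG\Cleq\underline{\alpha}_\DG^{-1}[(1+\norm{\Smt_\DG})\Visc\norm{u-z}_\DG+\norm{\Smt_\DG}\normLeb{p-q}{\Domain}]$. A triangle inequality, the infimum over $z$ together with Lemma~\ref{L:discretely-divfree-approx} (to introduce $\delta_\DG$), and the infimum over $q$ then give the first claim.

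For the pressure estimate I would control $\normLeb{p_\DG-q}{\Domain}$ through the inf-sup condition \eqref{inf-sup-DG}. For $w\in(\Polyell{\degree})^\Dim$, the displayed identity and \eqref{quasi-optimal-nec-div} give $b_\DG(w, p_\DG-q)=\Visc\int_\Domain\Grad u\colon\Grad\Smt_\DG w-\Visc\,a_\DG(u_\DG, w)-\int_\Domain(p-q)\,\Div\Smt_\DG w$. Rewriting $a_\DG(u_\DG, w)$ by \eqref{consistency-qopt-velocity} and using $\Jump{u}=0$, the first two terms collapse into contributions in $\GradM(u-u_\DG)$ and the jumps $\Jump{u-u_\DG}$, so the same Cauchy--Schwarz and inverse-trace estimates bound $b_\DG(w, p_\DG-q)$ by $[(1+\norm{\Smt_\DG})\Visc\norm{u-u_\DG}_\DG+\norm{\Smt_\DG}\normLeb{p-q}{\Domain}]\norm{w}_\DG$. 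Dividing by $\norm{w}_\DG$, invoking \eqref{inf-sup-DG}, adding $\normLeb{p-q}{\Domain}$ by the triangle inequality, and finally inserting the velocity estimate just proved (and using $\beta_\DG\le1$) produce the factor $(1+\norm{\Smt_\DG})^2(\underline{\alpha}_\DG\beta_\DG)^{-1}$ and the second claim after taking the infimum over $q$.

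I expect the bookkeeping of the skeleton terms to be the only genuine obstacle: the crux is to check that, once $a_\DG$ is replaced via \eqref{consistency-qopt-velocity}, every jump that survives is a jump of $u-u_\DG$ (or $z-u$) rather than of the individual pieces — this is exactly where $\Jump{u}=0$ is used — and that the averaged-gradient term is absorbed by the penalty thanks to \eqref{DG-threshold} and $\eta>\overline{\eta}$. Making the pressure dependence appear as $\normLeb{p-q}{\Domain}$ rather than $\normLeb{p}{\Domain}$ hinges on the orthogonality $\int_\Domain q\,\Div\Smt_\DG v=0$ for discretely divergence-free $v$, which is the single place where preservation of the discrete divergence is indispensable.
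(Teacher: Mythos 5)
Your argument is correct and follows essentially the same route as the paper: the same consistency identities from Lemma~\ref{L:consistency-qopt}, coercivity tested against a discretely divergence-free comparison function for the velocity, the inf-sup condition \eqref{inf-sup-DG} for the pressure, and Lemma~\ref{L:discretely-divfree-approx} to introduce $\delta_\DG$; the paper merely takes the $(\cdot,\cdot)_\DG$-orthogonal projection of $u$ onto $Z_\DG$ where you take an arbitrary $z\in Z_\DG$ and pass to the infimum, which changes nothing in the constants. One small caveat on your closing remark: the orthogonality $\int_\Domain q\,\Div\Smt_\DG v=0$ for $v\in Z_\DG$ rests on the consistency identity \eqref{quasi-optimal-nec-div} (a consequence of \eqref{consistency-qopt-conditions} via Lemma~\ref{L:consistency-qopt}), not on the stronger divergence preservation \eqref{consistency-qopt-divergence}, which is only assumed later in Theorem~\ref{T:qopt-prerob} --- your computation uses the former correctly, but your terminology conflates the two.
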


\begin{proof}
We first estimate the velocity error. For this purpose, let $\Pi u \in
Z_\DG$ be the $(\cdot, \cdot)_\DG$-orthogonal projection of $u$ onto
$Z_\DG$, that is 
\begin{equation*}
\label{DG-Ritz-projection}
\forall v \in Z_\DG \qquad
\int_\Domain \GradM(\Pi u - u) \colon \GradM v
+
\int_\Skel \dfrac{\eta}{h} \Jump{\Pi u} \cdot \Jump{v}
= 0.
\end{equation*} 
Setting $z := u_\DG - \Pi u$, the coercivity \eqref{coercivity-DG} and
the first equation of problems \eqref{Stokes} and \eqref{Stokes-dG}
yield 
\begin{equation}
\label{quasi-optimality-velocity-proof}
\begin{split}
\underline{\alpha}_\DG\norm{u_\DG  - \Pi u}_\DG^2
\leq
&\left( \int_\Domain \Grad u \colon \Grad \Smt_\DG z
- a_\DG(\Pi u, z)\right) +\\ 
&- \dfrac{1}{\Visc}\left( 
\int_\Domain p \Div \Smt_\DG z + b_\DG(z, p_\DG) \right)
=: \mathfrak{T}_1 - \dfrac{\mathfrak{T}_2}{\Visc} . 
\end{split}
\end{equation}
We bound $\mathfrak{T}_1$ according to the definition of $\Pi u$,
identity \eqref{consistency-qopt-velocity} (whose validity is guaranteed
by Lemma~\ref{L:consistency-qopt}) and inequality \eqref{DG-threshold} 
\begin{equation*}
\begin{split}
\mathfrak{T}_1
&=
\int_\Domain \GradM (u - \Pi u )\colon \GradM (\Smt_\DG z - z)
+ \int_\Sigma \Jump{\Pi u} \cdot \Avg{\Grad z} \cdot \Normal\\
&\leq (2+ \norm{\Smt_\DG}) \norm{u-\Pi u}_\DG \norm{z}_\DG.
\end{split}
\end{equation*}
We bound $\mathfrak{T}_2$ in view of the inclusion $z \in Z_\DG$
(which implies \(b_\DG(z, p_\DG)=0\)),
identity \eqref{quasi-optimal-nec-div} (whose validity is guaranteed
by Lemma~\ref{L:consistency-qopt}) and
\cite[Lemma~2.1]{Pyo.Nochetto:05}. Thus, we obtain 
\begin{equation*}
\mathfrak{T}_2 = 
\int_\Domain (p-q) \Div \Smt_\DG z
\leq 
\norm{\Smt_\DG} \norm{z}_\DG \normLeb{p-q}{\Domain}
\end{equation*} 
for all $q \in \PolyellAvg{\degree-1}$. We insert the estimates of $\mathfrak{T}_1$ and $\mathfrak{T}_2$ into \eqref{quasi-optimality-velocity-proof} and apply the triangle inequality, to obtain
\begin{equation*}
\norm{u-u_\DG}_\DG \leq
\dfrac{3 + \norm{\Smt_\DG}}{\underline{\alpha}_\DG}
\inf_{z \in Z_\DG} \norm{u-z}_\DG
+
\dfrac{\norm{\Smt_\DG}}{\Visc \underline{\alpha}_\DG} 
\inf_{q \in \PolyellAvg{\degree-1}}\normLeb{p-q}{\Domain},
\end{equation*}
where we have used \(\underline{\alpha}_\DG\le 1\).
We derive the claimed estimate of the velocity error by invoking
Lemma~\ref{L:discretely-divfree-approx}.

Next, in order to estimate the pressure error, let $R p \in
\PolyellAvg{\degree-1}$ be the $L^2$-orthogonal projection of $p$ onto
$\PolyellAvg{\degree-1}$. The inf-sup stability \eqref{inf-sup-DG} and
the triangle inequality yield 
\begin{equation}
\label{quasi-optimality-velocity-proof2}
\normLeb{p-p_\DG}{\Domain}
\leq
\normLeb{p-Rp}{\Domain}
+ \dfrac{1}{\beta_\DG}
\sup_{v \in (\Polyell{\degree})^\Dim}
\dfrac{ b_\DG(v, p_\DG - Rp)}{\norm{v}_\DG}.
\end{equation}
Let $v \in (\Polyell{\degree})^\Dim$. The first equations of problems \eqref{Stokes} and \eqref{Stokes-dG} reveal 
\begin{equation}
\label{quasi-optimality-velocity-proof3}
\begin{split}
b_\DG(v, p_\DG - R p)
& =
\Visc \left( \int_\Domain \Grad u \colon \Grad \Smt_\DG v - a_\DG(u_\DG, v) \right) + \\ 
&\quad - \left( \int_\Domain p \Div \Smt_\DG v + b_\DG(v, Rp) \right) 
=: \Visc \mathfrak{U}_1 + \mathfrak{U}_2.
\end{split}
\end{equation}
We bound $\mathfrak{U}_1$ according to identity \eqref{consistency-qopt-velocity} (which holds in view of Lemma~\ref{L:consistency-qopt}) and inequality \eqref{DG-threshold} 
\begin{equation*}
\begin{split}
\mathfrak{U}_1
&=
\int_\Domain \GradM (u - u_\DG )\colon \Grad \Smt_\DG v 
+ \int_\Sigma \Jump{u_\DG} \cdot \Avg{\Grad v} \cdot \Normal
- \int_\Skel \dfrac{\eta}{h} \Jump{u_\DG} \cdot \Jump{v}\\
&\leq (2+ \norm{\Smt_\DG}) \norm{u-u_\DG}_\DG \norm{v}_\DG.
\end{split}
\end{equation*}
We bound $\mathfrak{U}_2$ making use of identity \eqref{quasi-optimal-nec-div} (which holds in view of Lemma~\ref{L:consistency-qopt}) and \cite[Lemma~2.1]{Pyo.Nochetto:05}
\begin{equation*}
\mathfrak{U}_2 = 
\int_\Domain (p-Rp) \Div \Smt_\DG v
\leq 
\norm{\Smt_\DG} \norm{v}_\DG \normLeb{p-Rp}{\Domain}.
\end{equation*} 
We insert the estimates of $\mathfrak{U}_1$ and $\mathfrak{U}_2$ into \eqref{quasi-optimality-velocity-proof2} and \eqref{quasi-optimality-velocity-proof3}, to obtain
\begin{equation*}
\normLeb{p-p_\DG}{\Domain}
\leq
\Visc \dfrac{2 + \norm{\Smt_\DG}}{\beta_\DG} \norm{u-u_\DG}_\DG
+
\dfrac{1+\norm{\Smt_\DG}}{\beta_\DG} 
\inf_{q \in \PolyellAvg{\degree-1}} \normLeb{p-q}{\Domain}.
\end{equation*}
We derive the claimed estimate of the pressure error by means of the previous estimate of the velocity error. 
\end{proof}

\subsection{Quasi-optimality and pressure robustness}
\label{SS:pressure-robustness}

The assumptions in Theorem~\ref{T:quasi-optimality} do not guarantee that the discretization \eqref{Stokes-dG} is pressure robust in the sense of \eqref{qopr}. We illustrate this by a numerical experiment in section~\ref{SS:jumping-pressure}. Similarly as in \cite{Kreuzer.Zanotti:19}, we achieve pressure robustness by the additional assumption that the operator $\Smt_\DG$ preserves the discrete divergence. Recalling the definition of $\Divdisc{\DG}$ in \eqref{discrete-divergence}, this corresponds to prescribing a reinforced version of \eqref{quasi-optimal-nec-div}. 
\begin{theorem}[Quasi-optimality and pressure robustness]
\label{T:qopt-prerob}
Assume that the operator $\Smt_\DG$ satisfies \eqref{consistency-qopt-conditions} and
\begin{equation}
\label{consistency-qopt-divergence}
\Div \Smt_\DG v = \Divdisc{\DG} v
\end{equation}
for all $v \in (\Polyell{\degree})^\Dim$. Moreover, let $\eta > \overline{\eta}$, where $\overline{\eta}$ is as in \eqref{DG-threshold}. Then, denoting by $(u,p)$ and $(u_\DG, p_\DG)$ the solutions of \eqref{Stokes} and \eqref{Stokes-dG}, respectively, with viscosity $\Visc > 0$ and load $f \in \SobHD{\Domain}$, we have
\begin{equation*}
\label{qopt-prerob-velocity}
\norm{u-u_\DG}_\DG
\leq C 
\delta_\DG \dfrac{1+\norm{E_\DG}}{\underline{\alpha}_\DG}
\inf_{w \in (\Polyell{\degree})^\Dim}
\norm{u-w}_{\DG}
\end{equation*}
and
\begin{equation*}
\label{qopt-prerob-pressure}
\normLeb{p-p_\DG}{\Domain}
\leq
C \delta_\DG \dfrac{(1+\norm{\Smt_\DG})^2}{\underline{\alpha}_\DG\beta_\DG} \Visc
\inf_{w \in (\Polyell{\degree})^\Dim}
\norm{u-w}_{\DG}
+
\inf_{q \in \PolyellAvg{\degree-1}}
\normLeb{p-q}{\Domain}. 
\end{equation*}
\end{theorem}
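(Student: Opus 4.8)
The plan is to re-run the proof of Theorem~\ref{T:quasi-optimality}, using the extra hypothesis \eqref{consistency-qopt-divergence} to make the pressure-dependent part of the velocity error vanish \emph{exactly}, rather than merely be controlled by a best pressure approximation. The decisive observation is that \eqref{consistency-qopt-divergence} forces $\Smt_\DG(Z_\DG) \subseteq Z$, i.e.\ condition \eqref{kernel-inclusion}: if $z \in Z_\DG$ then $\Divdisc{\DG} z = 0$ by \eqref{kernel-DG}, whence $\Div \Smt_\DG z = \Divdisc{\DG} z = 0$ as an $L^2$-function, so that $\Smt_\DG z$ is genuinely (and not merely discretely) divergence-free. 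This is exactly the pressure-robustness condition analysed in Lemma~\ref{L:stability-velocity-DG}.

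For the velocity error I would set $\Pi u \in Z_\DG$ to be the $(\cdot,\cdot)_\DG$-orthogonal projection of $u$ and put $z := u_\DG - \Pi u \in Z_\DG$, then reproduce verbatim the identity \eqref{quasi-optimality-velocity-proof}, with $\mathfrak{T}_1$ as before and the pressure term $\mathfrak{T}_2 = \int_\Domain p \Div \Smt_\DG z + b_\DG(z, p_\DG)$. The key step is that \emph{both} summands of $\mathfrak{T}_2$ now vanish: $b_\DG(z, p_\DG) = 0$ since $z \in Z_\DG$ and $p_\DG \in \PolyellAvg{\degree-1}$ (recall \eqref{discrete-divergence}), while $\int_\Domain p \Div \Smt_\DG z = 0$ since $\Div \Smt_\DG z = 0$ by the observation above. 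Hence $\mathfrak{T}_2 = 0$ and the entire pressure contribution to the velocity error disappears, which is precisely the mechanism behind pressure robustness. Bounding $\mathfrak{T}_1 \leq (2 + \norm{\Smt_\DG})\norm{u - \Pi u}_\DG \norm{z}_\DG$ exactly as in Theorem~\ref{T:quasi-optimality}, and then applying a triangle inequality together with $\underline{\alpha}_\DG \leq 1$, the identity $\norm{u - \Pi u}_\DG = \inf_{z' \in Z_\DG}\norm{u - z'}_\DG$, and Lemma~\ref{L:discretely-divfree-approx}, delivers the claimed pressure-independent velocity bound.

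For the pressure error I would reuse the second half of the proof of Theorem~\ref{T:quasi-optimality}, which never invokes \eqref{consistency-qopt-divergence}: testing inf-sup stability \eqref{inf-sup-DG} against the $L^2$-projection $Rp$, splitting as in \eqref{quasi-optimality-velocity-proof3}, and controlling the resulting terms via \eqref{consistency-qopt-velocity}, \eqref{DG-threshold} and \cite[Lemma~2.1]{Pyo.Nochetto:05} gives a bound of the form $\normLeb{p - p_\DG}{\Domain} \Cleq \Visc \frac{1 + \norm{\Smt_\DG}}{\beta_\DG}\norm{u - u_\DG}_\DG + \frac{1+\norm{\Smt_\DG}}{\beta_\DG}\inf_{q}\normLeb{p - q}{\Domain}$. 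Substituting the freshly obtained pressure-robust velocity estimate into the first term then yields the stated pressure estimate. I expect no genuine obstacle, since the result is a sharpening of Theorem~\ref{T:quasi-optimality}; the only substantive point is recognizing that \eqref{consistency-qopt-divergence} upgrades the earlier best-approximation bound on $\mathfrak{T}_2$ to an exact cancellation, and the one mildly delicate check is that $\Div \Smt_\DG z$ vanishes in the strong $L^2$ sense (not merely when tested against $\PolyellAvg{\degree-1}$), which is exactly what \eqref{consistency-qopt-divergence} guarantees through the definition of $\Divdisc{\DG}$.
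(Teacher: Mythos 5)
Your velocity argument is exactly the paper's: the paper proves this theorem by rerunning the proof of Theorem~\ref{T:quasi-optimality} and observing that $\mathfrak{T}_2=0$, and your justification (both $b_\DG(z,p_\DG)=0$ for $z\in Z_\DG$ and $\Div\Smt_\DG z=\Divdisc{\DG}z=0$ in the strong $L^2$ sense) is precisely the intended mechanism. That part is correct and complete.

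There is, however, a gap in your pressure estimate. You propose to reuse the second half of the proof of Theorem~\ref{T:quasi-optimality} unchanged, keeping the bound $\mathfrak{U}_2\le\norm{\Smt_\DG}\norm{v}_\DG\normLeb{p-Rp}{\Domain}$, and then to substitute the pressure-robust velocity bound. This yields
\begin{equation*}
\normLeb{p-p_\DG}{\Domain}
\le
C\,\delta_\DG\dfrac{(1+\norm{\Smt_\DG})^2}{\underline{\alpha}_\DG\beta_\DG}\Visc
\inf_{w}\norm{u-w}_\DG
+\dfrac{1+\norm{\Smt_\DG}}{\beta_\DG}\inf_{q}\normLeb{p-q}{\Domain},
\end{equation*}
which is strictly weaker than the stated estimate: the theorem has coefficient exactly $1$ in front of $\inf_q\normLeb{p-q}{\Domain}$, whereas your coefficient $(1+\norm{\Smt_\DG})/\beta_\DG$ is $\ge 1$ and, since $\beta_\DG^{-1}\Cleq\max\{1,\sqrt{\eta}\}$, can grow with the penalty parameter --- which matters for the discussion of locking in section~\ref{SS:weak-penalization}. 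The point you miss is that \eqref{consistency-qopt-divergence} also kills $\mathfrak{U}_2$: since $\Div\Smt_\DG v=\Divdisc{\DG}v\in\PolyellAvg{\degree-1}$ and $Rp$ is the $L^2$-projection of $p$ onto that space, one has $\int_\Domain p\,\Div\Smt_\DG v=\int_\Domain Rp\,\Divdisc{\DG}v=-b_\DG(v,Rp)$ by \eqref{discrete-divergence}, hence $\mathfrak{U}_2=0$. Inserting this one-line observation in place of your bound on $\mathfrak{U}_2$ recovers the stated pressure estimate; this is exactly what the paper does.
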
 
\begin{proof}
The proof is the same as for Theorem~\ref{T:quasi-optimality}, with the only difference that we have $\mathfrak{T}_2 = 0$ and $\mathfrak{U}_2 = 0$ in \eqref{quasi-optimality-velocity-proof} and \eqref{quasi-optimality-velocity-proof3}, respectively, as a consequence of \eqref{consistency-qopt-divergence}.
\end{proof}

\subsection{Weak jump penalization}
\label{SS:weak-penalization}

According to Lemma~\ref{L:stability-velocity+pressure-DG}, the discretization \eqref{Stokes-dG} is uniquely solvable provided the penalty parameter $\eta$ is `large enough'. Therefore, it is worth checking the asymptotic behavior of the constants in the previous error estimates for $\eta \to +\infty$. To this end, recall the definition of the constant $\underline{\alpha}_\DG$ and the estimates of $\beta_\DG^{-1}$ and $\delta_\DG$ in \eqref{coercivity+boundedness+infsup-DG} and Lemma~\ref{L:discretely-divfree-approx}, respectively. Assume also that the operator norm of $\Smt_\DG$ can be bounded irrespective of $\eta$. Then, we see that the constant in the velocity error estimates of Theorems~\ref{T:quasi-optimality} and \ref{T:qopt-prerob} is $\Cleq \sqrt{\eta}$. Similarly, the constant in the corresponding pressure error estimates is $\Cleq \eta$. This indicates that we may have locking, in the sense of \cite{Babuska.Suri:92}, in the limit $\eta \to +\infty$. Moreover, the pressure error is potentially more sensitive to large values of $\eta$ than the velocity error. We confirm both expectations by a numerical experiment in section~\ref{SS:locking}.

To be more precise, consider the $H^1_0$-conforming space
\begin{equation*}
\label{Lagrange-space}
\Lagr{\degree} := \SobH{\Domain} \cap \Polyell{\degree}
= \{ v \in \Polyell{\degree} \mid  \Jump{v} = 0 \; \text{on} \; \Skel \}
\end{equation*}  
and the subspace 
\begin{equation*}
Z_\mathrm{SV} := \{ z \in (\Lagr{1})^\Dim \mid \Div z = 0 \}.
\end{equation*}
Let $u_\DG$ be defined by \eqref{Stokes-dG}. The inclusion $u_\DG \in Z_\DG$ and the a priori estimate in Lemma~\ref{L:stability-velocity+pressure-DG} entail that $u_\DG$ converges to an element of $Z_\mathrm{SV}$ as $\eta \to +\infty$. Hence, the best constant in the velocity error estimate of Theorem~\ref{T:quasi-optimality-EDG} cannot be smaller than the best constant $\delta_\mathrm{SV} \geq 1$ in the inequality
\begin{equation}
\label{delta-SV}
\forall u \in Z \qquad
\inf_{z \in Z_\mathrm{SV}} \normLeb{\Grad(u-z)}{\Domain}
\leq \delta_{\mathrm{SV}}
\inf_{v \in (\Lagr{\degree})^\Dim} 
\normLeb{\Grad(u-v)}{\Domain}
\end{equation}
in the limit $\eta \to +\infty$. Note that the size of $\delta_\mathrm{SV}$ is intimately related to the stability of the Scott-Vogelius pair $(\Lagr{\degree})^\Dim / \Div(\Lagr{\degree})^\Dim$. Unfortunately, such constant is known to be large for various combinations of $\degree, \Dim$ and $\Mesh$, see e.g. \cite[sections 4-5]{Babuska.Suri:92b}.  

A possible way out consists in considering variants of the form $a_\DG$ and of the scalar product $(\cdot, \cdot)_\DG$ with
\begin{equation}
\label{weak-penalization}
\int_\Skel \dfrac{\eta}{h}\Jump{w} \cdot \Jump{v}
\qquad \text{replaced by} \qquad
\int_\Skel \dfrac{\eta}{h} \pi_{\degree-1} \Jump{w} \cdot  \pi_{\degree-1} \Jump{v}
\end{equation}
where the $L^2$-orthogonal projection $\pi_{\degree-1}$ onto the space $\Polyell{\degree-1}(\Skel) := \{ v: \Skel \to \R \mid \forall F \in \FacesM \quad v_{|F} \in \Poly{\degree-1}(F) \}$ is applied component-wise. Such a modification does not affect neither the expression of the constants in \eqref{boundedness-DG} and \eqref{coercivity-DG} nor the validity of Theorems~\ref{T:quasi-optimality} and \ref{T:qopt-prerob}. Indeed, it can easily be shown that
\begin{equation*}
\label{inf-sup-CR}
\beta_\DG \geq \beta_{\mathrm{CR}}
:=
\inf_{q \in \PolyellAvg{\degree-1}} 
\sup_{w \in (\CR{\degree})^\Dim}
\dfrac{\int_\Domain q \DivM w}{\normLeb{\GradM w}{\Domain} \normLeb{q}{\Domain}}
\end{equation*}
where 
\begin{equation*}
\label{CR-space}
\CR{\degree} := \{ v \in  \Polyell{\degree} \mid \pi_{\degree-1}\Jump{v} =0 \; \text{on} \; \Skel \}.
\end{equation*}
With this modification, the constants in Theorems~\ref{T:quasi-optimality} and \ref{T:qopt-prerob} are bounded irrespective of $\eta$, provided $\beta_{\mathrm{CR}}^{-1} \leq C$. Several results ensuring the validity of this condition, for various combinations of $\degree, \Dim$ and $\Mesh$, are available in the literature, see \cite{Crouzeix.Raviart:73,Crouzeix.Falk:89,Baran.Stoyan:07} and the references therein. Moreover, we are not aware of any negative result.

\section{A moment- and divergence-preserving operator}
\label{S:smoother}
Motivated by the error estimates in Theorem~\ref{T:qopt-prerob}, we now aim at designing a linear operator $\Smt_\DG: (\Polyell{\degree})^\Dim \to \SobH{\Domain}^\Dim$ which satisfies the following conditions
\begin{subequations}
\label{EDG-conditions}
\begin{align}
\label{EDG-conditions-stability}
&\Smt_\DG \text{ is stable, in that } \norm{\Smt_\DG} \leq C,\\
\label{EDG-conditions-facemom}
&\Smt_\DG \text{ preserves } \Poly{\degree-1}(F)^\Dim \text{-moments on each } F \in \FacesMint, \text{ see } \eqref{consistency-qopt-faces}, \\
\label{EDG-conditions-divergence}
&\Smt_\DG \text{ preserves the discrete divergence } \Divdisc{\DG}, \text{ see } \eqref{consistency-qopt-divergence},\\
\label{EDG-conditions-elemmom}
&\Smt_\DG \text{ preserves } \Poly{\degree-2}(K)^\Dim \text{-moments in each } K \in \Mesh, \text{ see }
 \eqref{consistency-qopt-simplices}.
\end{align}
\end{subequations} 
We restrict ourselves to the case \(d=2\), in order to keep the discussion as easy as possible. In section~\ref{sec:3d-smoother}, we
discuss the differences in the design for \(d=3\).

\subsection{Outline of the construction}
\label{SS:ouline}

We first outline the strategy underlying our construction before we enter into the technical details. We shall obtain $\Smt_\DG$ from the combination of four operators, namely
\begin{equation}
\label{EDG-definition}
\Smt_\DG := \Smt_1 + \Smt_2 + \Smt_3 + \Smt_4.
\end{equation}
Our construction has a recursive structure in the sense that the definition of $\Smt_i$, $i \in \{2, \dots, 4\}$, involves the one of $\Smt_1, \dots, \Smt_{i-1}$. The role of each summand in \eqref{EDG-definition} can be summarized as follows.  
\begin{itemize}
	\item The first operator $\Smt_1$ maps $(\Polyell{\degree})^2$ into $(\Polyell{\degree} \cap \SobH{\Domain})^2$ by a simple averaging technique and is stable, in that \eqref{EDG-conditions-stability} holds.
	\item The second operator preserves the stability of $\Smt_1$, while correcting the moments on faces. This is obtained by mapping into a space of face-bubbles. As a result, the sum $\Smt_1 + \Smt_2$ enjoys both \eqref{EDG-conditions-stability} and \eqref{EDG-conditions-facemom}.
	\item The third operator $\Smt_3$ additionally enforces \eqref{EDG-conditions-divergence}, while preserving the validity of the previous properties. This is achieved by mapping into a space of volume-bubbles.
	\item Finally, the fourth operator $\Smt_4$ maps into a space of divergence-free volume-bubbles and is designed to guarantee that $\Smt_\DG$ enjoys also the last condition prescribed in \eqref{EDG-conditions-elemmom}.  
\end{itemize}
       
For $v \in (\Polyell{\degree})^2$, the definition of $\Smt_1$ in a simplex $K \in \Mesh$ involves the values of $v$ in the star around $K$, i.e. in the neighbouring simplices. The operators $\Smt_2$, $\Smt_3$ and $\Smt_4$ are obtained solving local problems on the faces or on the simplices of $\Mesh$. Each local problem is independent of the others and can efficiently be solved by resorting to a reference configuration. Thus, the resulting operator $\Smt_\DG$ is \textit{computationally feasible}, in the sense that, for any nodal basis function $\Phi$ of $(\Polyell{\degree})^2$, the computation of $\Smt_\DG \Phi$ requires only  $\mathrm{O}(1)$ operations.    

\subsection{Preliminary observations}
\label{SS:technical-preliminaries}

The main difficulty in the construction of $\Smt_\DG$ is that
conditions \eqref{EDG-conditions-facemom}, \eqref{EDG-conditions-divergence} and \eqref{EDG-conditions-elemmom} are not linearly independent. In fact,
prescribing sufficiently many moments of $\Smt_\DG v$ on the skeleton of $\Mesh$ as well as the divergence of $\Smt_\DG v$ can be expected to prescribe implicitly
also the moments of $\Smt_\DG v$ times gradients on each simplex of $\Mesh$. The next lemma
states this observation more precisely, showing also that the above conditions are at least compatible.   
\begin{lemma}[$\Grad \Poly{\degree-1}(K)$-moments]
\label{lem:DIV=>KMoments}
Let $\Smt: (\Polyell{\degree})^2 \to \SobH{\Domain}^2$ be an operator fulfilling \eqref{EDG-conditions-facemom} and \eqref{EDG-conditions-divergence}. Then, for all 
$v \in (\Polyell{\degree})^2$, $K \in \Mesh$ and $q \in \Poly{\degree-1}(K)$, we~have
\begin{equation}\label{element-moments-sideeffects}
\int_K \Smt v \cdot \Grad q = \int_K v \cdot \Grad q.
\end{equation}
\end{lemma}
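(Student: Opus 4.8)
The plan is to reduce the claimed identity to a purely local computation on the single simplex $K$: I integrate by parts on $K$ separately for $\Smt v$ and for $v_{|K}$ and subtract, so that the two hypotheses \eqref{EDG-conditions-facemom} and \eqref{EDG-conditions-divergence} can be used to annihilate the resulting boundary and divergence contributions against one another. Concretely, since $\Smt v \in \SobH{\Domain}^2$ restricts to $H^1(K)$ and $v_{|K}$ is a polynomial, for $q \in \Poly{\degree-1}(K)$ and $n_K$ the outward unit normal of $K$ I obtain
\[
\int_K (\Smt v - v)\cdot\Grad q
= -\int_K q\,(\Div\Smt v - \DivM v)
+ \int_{\partial K} q\,\big((\Smt v - v)\cdot n_K\big).
\]
It then remains to show that the right-hand side vanishes, and I treat its two terms by the two hypotheses in turn.

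For the boundary term I would test \eqref{EDG-conditions-facemom} on each interior face $F\subset\partial K$ with the admissible polynomial $m_F := q_{|F}\,n_K \in \Poly{\degree-1}(F)^2$; this replaces $\Smt v$ by $\Avg v$ on $F$, so that the contribution of $F$ becomes $\int_F q\,\big((\Avg v - v_{|K})\cdot n_K\big)$, an expression involving only $v$. On any boundary face of $K$ I instead use $\Smt v = 0$, which holds because $\Smt v \in \SobH{\Domain}^2 = H^1_0(\Domain)^2$, so that the contribution there reduces to $-\int_F q\,(v_{|K}\cdot n_K)$. For the divergence term I would invoke $\Div\Smt v = \Divdisc{\DG} v$ from \eqref{EDG-conditions-divergence} and then apply the alternative characterization of $\Divdisc{\DG}$ from Remark~\ref{R:discrete-divergence}, tested with the broken polynomial $\widetilde{q}\in\Polyell{\degree-1}$ obtained by extending $q$ by zero outside $K$. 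Since $\widetilde q$ is supported on $K$, this rewrites $\int_K q\,(\Divdisc{\DG}v-\DivM v)$ as $-\int_\Skel \Jump v \cdot \Normal \Avg{\widetilde q}$, i.e. as a sum over exactly the faces of $K$ of the same jump-type quantities.

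The last step is to compare the two sums face by face and check that they cancel. The main — and essentially the only — difficulty is the orientation bookkeeping: one must track whether $n_K$ agrees with the fixed face normal $\Normal$ and evaluate $\Avg v - v_{|K}$ and $\Avg{\widetilde q}$ accordingly. On an interior face $\Avg{\widetilde q} = \tfrac12 q$ (the $K$-side value against $0$ on the neighbour), while $\Avg v - v_{|K} = \pm\tfrac12\Jump v$ with the sign fixed by the orientation of $\Normal$; a short case distinction ($K$ on either side of an interior face, and $K$ adjacent to a boundary face, where the $H^1_0$-property of $\Smt v$ is what makes the two boundary/divergence pieces match) shows the two contributions cancel on every face of $K$. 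Hence the right-hand side of the displayed identity is zero, which is precisely \eqref{element-moments-sideeffects}. This cancellation is the rigorous form of the heuristic, announced before the lemma, that prescribing enough face moments together with the discrete divergence already fixes the $\Grad\Poly{\degree-1}(K)$-moments on each element.
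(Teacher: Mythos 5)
Your proposal is correct and uses the same ingredients as the paper's proof: integration by parts, the face-moment condition tested with $m_F = q\,n_K$, and the alternative characterization of $\Divdisc{\DG}$ from Remark~\ref{R:discrete-divergence} applied to the zero extension of $q$. The paper merely organizes the computation globally (applying \eqref{ibp-elementwise} twice to avoid the explicit face-by-face sign bookkeeping), but the argument is essentially identical.
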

\begin{proof}
Let $v \in (\Polyell{\degree})^2$ and $q \in \Poly{\degree-1}(K)$ be given. We extend $q$ to $\Domain \setminus K$ by zero. The integration by parts formula \eqref{ibp-elementwise} yields
\begin{equation*}
\begin{split}
\int_K \Smt v \cdot \Grad q
&=
-\int_\Domain q \Div \Smt v 
+ \int_{\Skel \setminus \partial \Domain} \Smt v \cdot \Normal \Jump{q}\\
&=
-\int_\Domain q \Divdisc{\DG} v 
+ \int_{\Skel \setminus \partial \Domain} \Avg{v} \cdot \Normal \Jump{q}, 
\end{split}
\end{equation*}
where the second identity follows from the assumption that $\Smt$ satisfies \eqref{EDG-conditions-facemom} and \eqref{EDG-conditions-divergence}. The equivalent definition of the discrete divergence in Remark~\ref{R:discrete-divergence} entails that
\begin{equation*}
\int_K \Smt v \cdot \Grad q
=
-\int_\Domain q \DivM v 
+ \int_\Skel \Jump{v} \cdot \Normal \Avg{q}
+ \int_{\Skel \setminus \partial \Domain} \Avg{v} \cdot \Normal \Jump{q}. 
\end{equation*} 
We conclude invoking once again the element-wise integration by parts formula and recalling that $q$ vanishes in $\Domain \setminus K$. 
\end{proof}

The above lemma suggests that we should enforce \eqref{EDG-conditions-elemmom} only on some complement of $\Grad \Poly{\degree-1}(K)$ in $\Poly{\degree-2}(K)^2$. We shall identify one such complement and construct $\Smt_\DG$ with the help of the \textit{curl} and \textit{rot} operators, that are defined as
\begin{equation}
\label{curl-rot-d=2}
\Curl(w) := (\partial_2 w, -\partial_1 w)
\qquad \text{and} \qquad
\Rot(v) := -\partial_2 v_1 + \partial_1 v_2
\end{equation}
where $w$ and $v=(v_1, v_2)$, respectively, are scalar- and vector-valued functions. Recall that assuming $w \in \SobH{K}$ and $v \in \Sob{K}$, we have 
\begin{equation}
\label{ibp-d=2}
\int_K \Curl(w) \cdot v
=
\int_K w \Rot(v)
\end{equation}
for all $K \in \Mesh$. Moreover, it holds
\begin{equation}
\label{div-curl=0}
\Div(\Curl (v)) =0.
\end{equation}

Recall the convention $\Poly{-1} = \{0\}$. The next lemma provides the desired decomposition of $\Poly{\degree-2}(K)^2$. 

\begin{lemma}[Decomposition of vector-valued polynomials]
\label{L:decompositions}
For all $k \geq 0$ and $K \in \Mesh$, define 
\begin{equation*}
x^\perp \Poly{k-1}(K)
:=
\left \{ x^\perp r := (-x_2 r, x_1 r)
\mid  r \in \Poly{k-1}(K) \right\}.
\end{equation*}
The operator $\Rot: \Poly{k}(K)^2 \to \Poly{k-1}(K)$ is injective on $x^\perp \Poly{k-1}(K)$ and its kernel coincides with $\Grad \Poly{k+1}(K)$. As a consequence, we have
\begin{equation}
\label{decompositions-d=2}
\Poly{k}(K)^2
=
\Grad \Poly{k+1}(K) 
\oplus
x^\perp \Poly{k-1}(K).
\end{equation}
\end{lemma}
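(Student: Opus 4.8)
The plan is to derive everything from one explicit computation. First I would evaluate, for $r \in \Poly{k-1}(K)$ and $x^\perp r = (-x_2 r, x_1 r)$, the quantity $\Rot(x^\perp r)$ using the definition in \eqref{curl-rot-d=2}; a direct differentiation gives
\[
\Rot(x^\perp r) = 2r + x_1\partial_1 r + x_2 \partial_2 r = 2r + x \cdot \Grad r.
\]
Decomposing $r = \sum_{m=0}^{k-1} r_m$ into homogeneous parts and using Euler's identity $x \cdot \Grad r_m = m\,r_m$, this reads $\Rot(x^\perp r) = \sum_m (m+2)\,r_m$. Hence the linear map $r \mapsto \Rot(x^\perp r)$ is diagonal in the homogeneous grading with strictly positive eigenvalues $m+2$, so it is a bijection of $\Poly{k-1}(K)$ onto itself. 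This single fact settles two points at once: the injectivity of $\Rot$ on $x^\perp \Poly{k-1}(K)$ (the first claim), and, since its restriction to $x^\perp \Poly{k-1}(K)$ already hits all of $\Poly{k-1}(K)$, the surjectivity of $\Rot \colon \Poly{k}(K)^2 \to \Poly{k-1}(K)$.

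Next I would identify the kernel. The inclusion $\Grad \Poly{k+1}(K) \subseteq \ker \Rot$ is immediate from $\Rot \Grad \equiv 0$ (symmetry of second derivatives). For equality I would argue by dimensions rather than invoke the Poincar\'e lemma. By rank--nullity and the surjectivity just established, $\dim \ker \Rot = \dim \Poly{k}(K)^2 - \dim \Poly{k-1}(K)$; and since $\Grad$ annihilates exactly the constants, $\dim \Grad \Poly{k+1}(K) = \dim \Poly{k+1}(K) - 1$. A short computation with $\dim \Poly{j}(K) = \binom{j+2}{2}$ shows these two numbers agree, so the inclusion is an equality and $\ker \Rot = \Grad \Poly{k+1}(K)$ (the second claim).

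Finally, for the decomposition \eqref{decompositions-d=2} I would verify that the two summands meet trivially and that their dimensions add up. If $v \in \Grad \Poly{k+1}(K) \cap x^\perp \Poly{k-1}(K)$, then $v = x^\perp r$ with $\Rot v = 0$, so $r = 0$ by the injectivity above and the sum is direct. Since $r \mapsto x^\perp r$ is visibly injective, $\dim x^\perp \Poly{k-1}(K) = \dim \Poly{k-1}(K)$, and the same binomial identity gives $\dim \Grad \Poly{k+1}(K) + \dim x^\perp \Poly{k-1}(K) = \dim \Poly{k}(K)^2$; hence the direct sum exhausts the whole space. The only step calling for care is the differentiation yielding $\Rot(x^\perp r)$ and the correct reading of the Euler operator, in particular keeping the grading relative to the fixed global origin rather than the barycenter of $K$; after this the argument reduces to routine dimension bookkeeping.
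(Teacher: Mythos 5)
Your proof is correct and its core is the same as the paper's: both hinge on the computation $\Rot(x^\perp r)=2r+x\cdot\Grad r$, which acts on the monomial (equivalently, homogeneous) decomposition of $r$ with the strictly positive factors $2+\snorm{\alpha}$, giving injectivity on $x^\perp\Poly{k-1}(K)$; the decomposition then follows from trivial intersection plus dimension counting. The one place you genuinely diverge is the identification $\ker\Rot=\Grad\Poly{k+1}(K)$: the paper simply cites this as a standard fact from vector calculus (in effect the polynomial Poincar\'e lemma), whereas you derive it from rank--nullity, using that your eigenvalue computation already shows $\Rot$ maps $x^\perp\Poly{k-1}(K)$ \emph{onto} $\Poly{k-1}(K)$ and hence is surjective, together with the binomial identity $2\binom{k+2}{2}-\binom{k+1}{2}=\binom{k+3}{2}-1$. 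This makes the argument self-contained at essentially no extra cost, and it also handles $k=0$ uniformly (where the paper dismisses the claim as clear). Your caveat about taking the homogeneous grading relative to the global origin is well placed, since $x^\perp$ is defined via the ambient coordinates and Euler's identity must be read accordingly.
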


\begin{proof}
We assume $k \geq 1$, because the claim is clear for $k = 0$. Let $\Rot(x^\perp r) = 0$ for some $r = \sum_{\snorm{\alpha} \leq k-1} a_\alpha x^\alpha \in \Poly{k-1}(K)$. We infer $\sum_{\snorm{\alpha} \leq k-1} (2+\snorm{\alpha}) a_\alpha x^\alpha = 0$, showing that $r = 0$. This proves the injectivity of $\Rot$ on $x^\perp \Poly{k-1}(K)$. Next, the fact that the kernel of $\Rot$ on $\Poly{k}(K)^2$ coincides with $\Grad\Poly{k+1}(K)$ is a standard result from vector calculus. This entails that $\Grad \Poly{k+1}(K) \cap x^\perp \Poly{k-1}(K) = \{0\}$. Then, the claimed decomposition of $\Poly{k}(K)^2$ follows from a dimensional argument.     
\end{proof}

As mentioned before, the construction of the operators $\Smt_3$ and $\Smt_4$ in \eqref{EDG-definition} involves the solution of local problems on each triangle in $\Mesh$. For both theoretical and computational convenience, we shall formulate such problems on a reference triangle $\RefSim$, with the help of the Piola's transformations, see e.g. \cite[Section~2.1.3]{Boffi:Brezzi:Fortin.13}. Hence, for all $K \in \Mesh$, we fix
a one-to-one affine mapping $F_K: \RefSim \to K$, with Jacobian matrix
$DF_K$. We set $J_K := \snorm{\det DF_K}$. Note that $DF_K$
is a constant invertible matrix and that $J_K$ is a positive constant. 

The contravariant and the covariant Piola's transformations, respectively, map functions $v_\Ref, w_\Ref \in \Sob{\RefSim}^2$ into $\Sob{K}^2$ and are given by   
\begin{equation}
\label{Piola}
\PiolaCon v_\Ref 
:=
J_K^{-1} DF_K (v_\Ref \circ F_K^{-1}) 
\quad \text{and} \quad
\PiolaCov w_\Ref
:=
DF_K^{-T} (w_\Ref \circ F_K^{-1}).
\end{equation}
Remarkably, we have
\begin{equation}
\label{Piola-integral}
\int_K \PiolaCon v_\Ref \cdot \PiolaCov w_\Ref
=
\int_{\RefSim} v_\Ref \cdot w_\Ref.
\end{equation}
Moreover, the contravariant Piola's transformation is such that 
\begin{equation}
\label{Piola-divergence}
\Div(\PiolaCon v_\Ref)
=
J_K^{-1} (\Div v_\Ref) \circ F_K^{-1}
\end{equation}
and
\begin{equation}
\label{Piola-scaling}
\normLeb{\PiolaCon v_\Ref}{K} 
\leq
J_K^{-\frac{1}{2}} h_K 
\normLeb{v_\Ref}{\RefSim}
\leq
C
\normLeb{\PiolaCon v_\Ref}{K} .
\end{equation}

\subsection{Construction of $\Smt_\DG$} 
\label{SS:construction-EDG}
We now construct an operator $\Smt_\DG: (\Polyell{\degree})^\Dim \to \SobH{\Domain}^\Dim$, $\degree \in \N$, which satisfies \eqref{EDG-conditions}. As stated in \eqref{EDG-definition}, we set $\Smt_\DG := \sum_{i=1}^4 \Smt_i$, where each operator $\Smt_i$ is defined as follows. \medskip

\emph{Definition of $\Smt_1$}. Each polynomial in
\(\Poly{\degree}(K)\), $K \in \Mesh$, is uniquely determined by its point values at the
Lagrange nodes \(\mathcal{L}_{\ell}(K)\) of degree
\(\degree\). Recall also that the nodal degrees of freedom
of \(\Lagr{\degree}=\Polyell{\degree}\cap H_0^1(\Omega)\) are given by the evaluations at the points \(\mathcal{L}_\degree:=\bigcup_{K\in
  \Mesh}\mathcal{L}_\degree(K)\cap\Omega\). We denote by
\(\Phi^z_\degree\in \Lagr{\degree}\) the nodal basis
function associated with the evaluation at \(z\in \mathcal{L}_\degree\), that is \(\Phi^z_\degree(y)=\delta_{zy}\) for all \(y,z\in
\mathcal{L}_\degree\). Then, for $v \in (\Polyell{\degree})^\Dim$, we let $\Smt_1 v$ be defined by 
\begin{equation}
\label{E1-definition}
\Smt_1 v :=\sum_{z\in \mathcal{L}_\degree} \dfrac{1}{N_z}
\left( \sum_{K \in \Mesh, \,K \ni z } v_{|K}(z) \right) \Phi^z_\degree
\end{equation}
where $N_z$ is the number of triangles in $\Mesh$ touching $z$. Averaging operators like $\Smt_1$ or variants are common devices in the context of dG methods, see e.g. \cite[section~5.5.2]{DiPietro.Ern:12}.
\medskip

\emph{Definition of $\Smt_2$}. We define $\Smt_2$ in the vein of
\cite[Section~3.2]{Veeser.Zanotti:18b}. For every interior edge \(F \in \FacesMint\), let $K_1, K_2 \in \Mesh$ be such that \(F =  K_1 \cap K_2\). Denote by \(\mathcal{L}_\degree(F):=\mathcal{L}_\degree(K_1)\cap \mathcal{L}_\degree(K_2) \) the Lagrange nodes of degree $\degree$ on $F$ and let \(b_F:=\prod_{z\in \mathcal{L}_1(F)}\Phi^z_1\) be the quadratic face bubble supported on $K_1 \cup K_2$. We introduce a linear operator
\(\Smt_{2,F}:L^2(F)^2\to \Poly{\degree-1}(F)^2\) by solving the local problem
\begin{align}
\label{E2F-definition}
\forall m_F \in \Poly{\degree-1}(F)^2\qquad\int_F \Smt_{2,F} v \cdot m_F\,b_F=\int_Fv \cdot m_F.
\end{align}
Then, for \(v \in (\Polyell{\degree})^2 \), we set
\begin{equation}
\label{E2-definition}
\Smt_2 v :=\sum_{F\in\FacesMint}\sum_{z\in\mathcal{L}_{\degree-1}(F)}
\Smt_{2, F}(\Avg{v} - \Smt_1 v)(z)\,\Phi^z_{\degree-1} b_F.
\end{equation}
Note that each summand involves an extension from $F$ to $K_1 \cup K_2$.
\medskip

\emph{Definition of $\Smt_3$}. We define $\Smt_3$ in the vein of \cite{Kreuzer.Zanotti:19,Verfuerth.Zanotti:19}. Let $\RefSim$ be the reference triangle introduced in section~\ref{SS:technical-preliminaries}. We obtain a triangulation $\Mesh_\Ref$ of $\RefSim$ connecting each vertex with the barycenter, see Figure~\ref{F:alfeld-refinement}. The space $\Polyell{\degree+1}(\Mesh_\Ref)$ consists of all piecewise polynomials of degree $\leq (\degree+1)$ on $\Mesh_\Ref$. We consider the subspaces
\begin{equation*}
\Lagr{\degree+1}(\Mesh_\Ref)
:= 
\Polyell{\degree+1}(\Mesh_\Ref) \cap \SobH{\RefSim}
\quad \text{and} \quad
\PolyellAvg{\degree}(\Mesh_\Ref)
:=
\Polyell{\degree}(\Mesh_\Ref) \cap \LebH{\RefSim}
\end{equation*}
and introduce a linear operator $\Smt_{3,\Ref}: \PolyellAvg{\degree}(\Mesh_\Ref) \to \Lagr{\degree+1}(\Mesh_\Ref)^2$ by imposing
\begin{equation}
\label{E3ref-definition}
\Smt_{3,\Ref} (q_\Ref) := \mathrm{argmin} \left \{ \normLeb{\Grad v_\Ref}{\RefSim}^2 \mid v_\Ref \in \Lagr{\degree+1}(\Mesh_\Ref)^2,  \Div v_\Ref = q_\Ref \right \}.
\end{equation} 
This constrained quadratic minimization problem is uniquely solvable as a consequence of \cite[Theorem~3.1]{Guzman.Neilan:18}. Note that we can equivalently rewrite \eqref{E3ref-definition} as a discrete Stokes-like problem, with velocity space $\Lagr{\degree+1}(\Mesh_\Ref)^2$, pressure space $\PolyellAvg{\degree}(\Mesh_\Ref)$ and right-hand side zero in the momentum equation and $q_\Ref$ in the continuity equation. Then, for $v \in (\Polyell{\degree})^2$, we define
\begin{equation}
\label{E3-definition}
\Smt_3 v :=
\sum_{K \in \Mesh}  \PiolaCon \Smt_{3,\Ref}( J_K (\Divdisc{\DG} v - \sum_{i=1}^{2}\Div \Smt_i v) \circ F_K)
\end{equation}
where each summand vanishes on $\partial K$ and is extended by zero outside $K$. The discussion in the next section confirms that the argument of $\Smt_{3, \Ref}$ is indeed an element of $\PolyellAvg{\degree}(\Mesh_\Ref)$. \medskip

\emph{Definition of $\Smt_4$}. Denote by $b_{\Ref}$ the cubic
bubble function on $\RefSim$, that is obtained by taking the product of the Lagrange basis functions of \(\Poly{1}(\RefSim)\) associated with the evaluations at the vertices of $\RefSim$. For $\degree \geq 3$, we introduce a linear operator $\Smt_{4, \Ref}: \Leb{\RefSim}^2 \to x^\perp \Poly{\degree-3}(\RefSim)$ by imposing
\begin{equation}
\label{E4ref-definition}
\forall m_\Ref \in x^\perp \Poly{\degree-3}(\RefSim) \quad\int_{\RefSim} \Rot(\Smt_{4, \Ref} q_\Ref) \Rot(m_\Ref) b_{\Ref}^2
=
\int_{\RefSim} q_\Ref \cdot m_\Ref. 
\end{equation}
Lemma~\ref{L:decompositions} ensures that this problem is uniquely solvable. Then, for $v \in (\Polyell{\degree})^2$, we define $\Smt_4 v = 0$ if $\degree \in  \{1,2\}$, otherwise 
\begin{equation}
\label{E4-definition}
\Smt_4 v := \sum_{K \in \Mesh}
\PiolaCon \Curl( b_{\Ref}^2 \Rot \Smt_{4, \Ref} (\PiolaCon)^{-1}(v - \sum_{i=1}^{3}\Smt_i v)  )
\end{equation} 
where each summand vanishes on $\partial K$ and is extended by zero outside $K$. 

\begin{figure}[ht]
	\centering
	\begin{tikzpicture}
	\coordinate (z1) at (0,0);
	\coordinate (z2) at (2,0);
	\coordinate (z3) at (1,1.5);
	\coordinate (c1) at (1, 0.5);
	\path (z1) edge (z2);
	\path (z2) edge (z3);
	\path (z3) edge (z1);
	\coordinate (z4) at (4,0);
	\coordinate (z5) at (6,0);
	\coordinate (z6) at (5,1.5);
	\coordinate (c2) at (5, 0.5);
	\path (z4) edge (z5);
	\path (z5) edge (z6);
	\path (z6) edge (z4);
	\path[dashed] (z4) edge (c2);
	\path[dashed] (z5) edge (c2);
	\path[dashed] (z6) edge (c2);
	\end{tikzpicture}
	\caption{Reference triangle $\RefSim$ (left) and triangulation $\Mesh_\Ref$ (right).}
	\label{F:alfeld-refinement}
\end{figure}

\subsection{Preservation properties of $\Smt_\DG$}
\label{SS:preservation-EDG}
In this section we prove that the operator $\Smt_\DG$ defined above satisfies the conditions \eqref{EDG-conditions-facemom}, \eqref{EDG-conditions-divergence} and \eqref{EDG-conditions-elemmom}, i.e. it preserves the discrete divergence and all the prescribed moments on the faces and the triangles of $\Mesh$.
For this purpose, we make use of the
following integration by parts formula, which generalizes Lemma~\ref{lem:DIV=>KMoments}. 

\begin{lemma}
\label{L:loc-Gauss}
Let $\Smt: (\Polyell{\degree})^2 \to \SobH{\Domain}^2$ be a linear operator satisfying \eqref{EDG-conditions-facemom}. Then,  
for all $v \in (\Polyell{\degree})^2$, \(K\in\Mesh\) and
\(q\in\Poly{\ell-1}(K)\), we have
\begin{align*}
\int_K
(\Divdisc{\DG}v - \Div \Smt v) q=- \int_K
    (v - \Smt v) \cdot \nabla q.
  \end{align*}
\end{lemma}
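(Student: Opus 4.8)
The plan is to mimic the proof of Lemma~\ref{lem:DIV=>KMoments}, but now keeping track of the (generally nonzero) difference between $\Divdisc{\DG}v$ and $\Div\Smt v$. Fix $v\in(\Polyell{\degree})^2$, $K\in\Mesh$ and $q\in\Poly{\degree-1}(K)$, and extend $q$ by zero to $\Domain\setminus K$, so that $q\in\Polyell{\degree-1}$. I will compute the two element integrals $\int_K(\Divdisc{\DG}v)\,q$ and $\int_K(\Div\Smt v)\,q$ separately by integration by parts and then subtract them. The guiding observation is that both computations leave behind exactly the same skeleton contribution $\int_{\Skel\setminus\partial\Domain}\Avg{v}\cdot\Normal\Jump{q}$, which therefore cancels in the difference and produces the claimed identity.

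For the term involving $v$, I would combine the alternative characterization of $\Divdisc{\DG}$ in Remark~\ref{R:discrete-divergence} with the element-wise integration by parts formula \eqref{ibp-elementwise} applied to the (discontinuous) function $v$ and the extended $q$. Since $q$ is supported in $K$, the coupling terms $\int_\Skel\Jump{v}\cdot\Normal\Avg{q}$ appearing in both formulas cancel, leaving
\begin{equation*}
\int_K(\Divdisc{\DG}v)\,q
= -\int_K v\cdot\Grad q
+ \int_{\Skel\setminus\partial\Domain}\Avg{v}\cdot\Normal\Jump{q}.
\end{equation*}

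For the term involving $\Smt v$, I would apply \eqref{ibp-elementwise} to $\Smt v$ and $q$. Because $\Smt v\in\SobH{\Domain}^2$ has vanishing jump across the whole skeleton, the first boundary term drops out and, using $\Avg{\Smt v}=\Smt v$, one obtains $\int_K(\Div\Smt v)\,q = -\int_K\Smt v\cdot\Grad q + \int_{\Skel\setminus\partial\Domain}\Smt v\cdot\Normal\Jump{q}$. The key step, where the hypothesis enters, is the observation that on each interior face $F$ of $K$ the integrand $\Normal\Jump{q}$ restricts to an element of $\Poly{\degree-1}(F)^2$: indeed $\Normal$ is constant on $F$ and the trace of $q\in\Poly{\degree-1}(K)$ lies in $\Poly{\degree-1}(F)$, while $\Jump{q}$ reduces to $\pm q_{|K}$ since $q$ vanishes on the neighbouring triangle. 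The face-moment condition \eqref{EDG-conditions-facemom} then lets me replace $\Smt v$ by $\Avg{v}$ in this boundary integral, giving the same skeleton term as above.

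Subtracting the two identities, the boundary contributions cancel and I am left with $\int_K(\Divdisc{\DG}v-\Div\Smt v)\,q = -\int_K(v-\Smt v)\cdot\Grad q$, which is the assertion. The only genuinely delicate point is the bookkeeping that aligns the two skeleton terms: one must check that $\Normal\Jump{q}$ is an admissible test object for \eqref{EDG-conditions-facemom} on every interior face of $K$ and that no boundary faces of $\Domain$ contribute, both of which follow from the support of $q$ and the $H^1_0$-conformity of $\Smt v$.
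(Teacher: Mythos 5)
Your argument is correct and follows essentially the same route as the paper, which simply instructs the reader to repeat the proof of Lemma~\ref{lem:DIV=>KMoments} without invoking the divergence-preservation property: both rest on extending $q$ by zero, the element-wise integration by parts formula \eqref{ibp-elementwise}, the alternative characterization of $\Divdisc{\DG}$ from Remark~\ref{R:discrete-divergence}, the vanishing jump of $\Smt v$, and the observation that $\Normal\Jump{q}$ is an admissible test function for \eqref{EDG-conditions-facemom} on each interior face of $K$. Organizing the computation as two separate identities whose common skeleton term cancels upon subtraction is only a cosmetic difference from the paper's single chain of equalities.
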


\begin{proof}
Proceed as in the proof of Lemma~\ref{lem:DIV=>KMoments}, without assuming that $\Smt$ satisfies condition \eqref{EDG-conditions-divergence}.
\end{proof}

We are now prepared to prove the claimed properties of $\Smt_\DG$.

\begin{theorem}[Preservation properties of $\Smt_\DG$]
\label{T:conservation}
The operator $\Smt_\DG$ defined in section~\ref{SS:construction-EDG}
satisfies the conditions \eqref{EDG-conditions-facemom}, \eqref{EDG-conditions-divergence} and \eqref{EDG-conditions-elemmom}.
\end{theorem}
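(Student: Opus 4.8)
The plan is to verify each of the three preservation properties \eqref{EDG-conditions-facemom}, \eqref{EDG-conditions-divergence} and \eqref{EDG-conditions-elemmom} in turn, exploiting the recursive structure $\Smt_\DG=\Smt_1+\Smt_2+\Smt_3+\Smt_4$ and the fact that each later operator is supported on face- or volume-bubbles that vanish at the degrees of freedom used to define the earlier ones. The guiding principle is that each $\Smt_i$ corrects exactly one condition while leaving the previously established ones intact, because its range is orthogonal (in the relevant sense) to the test quantities controlled before. I would therefore organize the proof as three lemma-style steps, corresponding to the three conditions, and within each step track the contribution of all four summands.

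First I would establish the face-moment condition \eqref{EDG-conditions-facemom}. The key observation is that $\Smt_3 v$ and $\Smt_4 v$ vanish on $\partial K$ for every $K$, hence vanish on every interior face $F$, so they contribute nothing to face integrals; thus on faces only $\Smt_1+\Smt_2$ matters. For these, I would unwind the definition \eqref{E2-definition}: testing $\Smt_{2,F}(\Avg v-\Smt_1 v)\,b_F$ against $m_F\in\Poly{\degree-1}(F)^2$ and invoking the defining identity \eqref{E2F-definition} shows that $\int_F\Smt_2 v\cdot m_F=\int_F(\Avg v-\Smt_1 v)\cdot m_F$, whence $\int_F(\Smt_1 v+\Smt_2 v)\cdot m_F=\int_F\Avg v\cdot m_F$, which is precisely \eqref{consistency-qopt-faces}. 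A small technical point to check is that the Lagrange-node evaluations defining $\Smt_2 v$ reproduce the operator $\Smt_{2,F}$ exactly, i.e. that $\sum_z\Smt_{2,F}(\cdot)(z)\Phi^z_{\degree-1}=\Smt_{2,F}(\cdot)$ as elements of $\Poly{\degree-1}(F)^2$.

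Next I would treat the divergence condition \eqref{EDG-conditions-divergence}. Since $\Smt_4 v$ is a curl by \eqref{E4-definition}, \eqref{div-curl=0} gives $\Div\Smt_4 v=0$, so only $\Smt_1+\Smt_2+\Smt_3$ contributes to the divergence. The role of $\Smt_3$ is to absorb the residual discrete divergence: by the Piola divergence identity \eqref{Piola-divergence} and the defining constraint $\Div\Smt_{3,\Ref}(q_\Ref)=q_\Ref$ in \eqref{E3ref-definition}, one computes $\Div\Smt_3 v=\Divdisc{\DG}v-\sum_{i=1}^2\Div\Smt_i v$ on each $K$, so that $\sum_{i=1}^3\Div\Smt_i v=\Divdisc{\DG}v$, giving \eqref{consistency-qopt-divergence}. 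Here I must also confirm the well-posedness remark flagged after \eqref{E3-definition}: that the argument $J_K(\Divdisc{\DG}v-\sum_{i=1}^2\Div\Smt_i v)\circ F_K$ indeed lies in $\PolyellAvg{\degree}(\Mesh_\Ref)$, i.e. has zero mean on $\RefSim$. This mean-value check is where Lemma~\ref{L:loc-Gauss} enters, applied with $q=1$ (using $\Poly{-1}=\{0\}$ appropriately or the constant directly), so that the face-moment preservation already proved forces the correct compatibility.

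Finally I would verify the element-moment condition \eqref{EDG-conditions-elemmom}. This is the step I expect to be the main obstacle, precisely because of the linear dependence exposed in Lemma~\ref{lem:DIV=>KMoments}: the gradient part $\Grad\Poly{\degree-1}(K)$ of the decomposition \eqref{decompositions-d=2} is already pinned down by the face-moment and divergence conditions and must not be disturbed. The strategy is to split an arbitrary $m_K\in\Poly{\degree-2}(K)^2$ according to \eqref{decompositions-d=2} as $m_K=\Grad q+x^\perp r$. Against the gradient part, Lemma~\ref{lem:DIV=>KMoments} (valid since \eqref{EDG-conditions-facemom} and \eqref{EDG-conditions-divergence} are now established) already yields $\int_K\Smt_\DG v\cdot\Grad q=\int_K v\cdot\Grad q$, and crucially $\Smt_4 v$ must not spoil this: since $\Smt_4 v$ is a curl, integration by parts \eqref{ibp-d=2} plus $\Rot\Grad q=0$ shows $\int_K\Smt_4 v\cdot\Grad q=0$. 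Against the $x^\perp r$ part, the first three operators need not match $v$, but $\Smt_4$ is designed exactly to fix the discrepancy: transferring to $\RefSim$ via the contravariant Piola map and using \eqref{Piola-integral} converts $\int_K(\Smt_4 v-(v-\sum_{i=1}^3\Smt_i v))\cdot\PiolaCov(\cdots)$ into a reference integral, and the defining identity \eqref{E4ref-definition} together with the $\Rot$-injectivity on $x^\perp\Poly{\degree-3}(\RefSim)$ from Lemma~\ref{L:decompositions} closes the gap. The delicate bookkeeping is matching the polynomial degrees and the Piola-transformed test functions so that $x^\perp\Poly{\degree-3}(\RefSim)$ on the reference element corresponds, under pullback, to the full complement $x^\perp\Poly{\degree-2}(K)$ worth of moments actually required; getting this degree count and the covariant/contravariant pairing consistent is the crux of the argument.
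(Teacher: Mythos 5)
Your proposal follows the paper's proof essentially step by step for the face-moment and divergence conditions, including the two technical checks you flag (the nodal reconstruction of $\Smt_{2,F}$ and the mean-zero compatibility of the argument of $\Smt_{3,\Ref}$, which the paper indeed settles via Lemma~\ref{L:loc-Gauss}). For the element-moment condition your route differs slightly, and the difference is exactly where your own ``delicate bookkeeping'' warning lives. You decompose $m_K\in\Poly{\degree-2}(K)^2$ on the \emph{physical} element as $\Grad q\oplus x^\perp r$ and handle the gradient part by invoking Lemma~\ref{lem:DIV=>KMoments} for the full $\Smt_\DG$ --- a small but genuine simplification over the paper, which instead re-derives that identity by treating $\Smt_3$ and $\Smt_4$ separately via Lemma~\ref{L:loc-Gauss}. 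The wrinkle is the complement: $\Smt_4$ is built to control moments against $\PiolaCov\bigl(x^\perp\Poly{\degree-3}(\RefSim)\bigr)$, and the covariant Piola map does \emph{not} carry $x^\perp\Poly{\degree-3}(\RefSim)$ onto $x^\perp\Poly{\degree-3}(K)$ (only the gradient summand $\Grad\Poly{\degree-1}$ is preserved), so the physical-element splitting you propose does not pair directly with \eqref{E4ref-definition}; also note the complement in $\Poly{\degree-2}(K)^2$ is $x^\perp\Poly{\degree-3}(K)$, not $x^\perp\Poly{\degree-2}(K)$ as written. The paper's resolution is to first rewrite the whole condition \eqref{EDG-conditions-elemmom} on $\RefSim$ as \eqref{EDG-conditions-elemmom-equiv} using \eqref{Piola-integral}, and only then apply the decomposition of Lemma~\ref{L:decompositions} on the reference element; since $\PiolaCov$ is a bijection of $\Poly{\degree-2}(\RefSim)^2$ onto $\Poly{\degree-2}(K)^2$ mapping gradients to gradients, this reference-first ordering closes the argument with no extra round of decomposition. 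With that reordering your outline is complete and correct.
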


\begin{proof}
Let $v \in (\Polyell{\degree})^2$. We check one by one the validity of the desired conditions.\medskip

\textit{Proof of \eqref{EDG-conditions-facemom}}. By construction, each summand in the definitions \eqref{E3-definition} and \eqref{E4-definition} of $\Smt_3$ and $\Smt_4$, respectively, is supported in one triangle $K \in \Mesh$ and vanishes on $\partial K$. This entails that $\Smt_\DG v = \Smt_1 v + \Smt_2 v$ on the skeleton $\Skel$. Moreover, for $F \in \FacesMint$, we have $(\Smt_2 v)_{|F} = \Smt_{2,F} (\Avg{v} - \Smt_1 v) b_F$, as a consequence of \eqref{E2-definition}. Hence, for all $m_F \in \Poly{\degree-1}(F)^2$, the definition of $\Smt_{2,F}$ in \eqref{E2F-definition} implies
\begin{equation*}
\int_F \Smt_2 v \cdot m_F
=
\int_F \Smt_{2,F} (\Avg{v} - \Smt_1 v) \cdot m_F b_F
=
\int_F (\Avg{v} - \Smt_1 v) \cdot m_F.
\end{equation*}
Rearranging terms, we infer that
\begin{equation*}
\int_F \Smt_\DG v \cdot m_F
=
\int_F (\Smt_1 v + \Smt_2 v) \cdot m_F
=
\int_F \Avg{v} \cdot m_F.
\end{equation*}

\textit{Proof of \eqref{EDG-conditions-divergence}}. Each summand in the definition \eqref{E4-definition} of $\Smt_4$ is divergence-free, as a consequence of \eqref{div-curl=0} and \eqref{Piola-divergence}. This entails that $\Div \Smt_\DG v = \sum_{i=1}^{3} \Div \Smt_i v$ in $\Domain$. Moreover, for $K \in \Mesh$, the identity \eqref{Piola-divergence} and the definitions \eqref{E3ref-definition} and \eqref{E3-definition} of $\Smt_{3,\Ref}$ and $\Smt_3$, respectively, reveal that
\begin{equation*}
(\Div \Smt_3 v)_{|K} = (\Div_\DG v - \sum_{i=1}^{2} \Div \Smt_i v)_{|K}.
\end{equation*} 
Rearranging terms, we infer that
\begin{equation*}
\Div \Smt_\DG v
=
\sum_{i=1}^{3} \Div \Smt_i v
=
\Divdisc{\DG} v. 
\end{equation*}

\textit{Proof of \eqref{EDG-conditions-elemmom}}. For all $K \in \Mesh$, the covariant Piola's transformation $\PiolaCov$ from \eqref{Piola} maps $\Poly{\degree-2}(\RefSim)^2$ into $\Poly{\degree-2}(K)^2$ and is one-to-one. Then, according to the transformation rule \eqref{Piola-integral}, we see that the following identity
\begin{equation}
\label{EDG-conditions-elemmom-equiv}
\forall m_\Ref \in \Poly{\degree-2} (\RefSim)^2
\quad
\int_{\RefSim} (\PiolaCon)^{-1}\Smt_\DG v \cdot m_\Ref
=
\int_{\RefSim} (\PiolaCon)^{-1}v \cdot m_\Ref
\end{equation}
is an equivalent formulation of \eqref{EDG-conditions-elemmom}. Moreover, according to the decomposition stated in Lemma~\ref{L:decompositions}, we can split \eqref{EDG-conditions-elemmom-equiv} into two independent conditions with test functions in $\Grad \Poly{\degree-1}(\RefSim)$ and $x^\perp \Poly{\degree-3}(\RefSim)$, respectively. Let us first assume that $m_\Ref = \Grad q_\Ref \in \Grad \Poly{\degree-1}(\RefSim)$. The definition of $\Smt_4$ in \eqref{E4-definition}, the integration by parts rule \eqref{ibp-d=2} and Lemma~\ref{L:decompositions} imply that
\begin{equation*}
\begin{split}
&\int_{\RefSim} (\PiolaCon)^{-1} \Smt_4 v \cdot m_\Ref
=
\int_{\RefSim} \Curl ( b_{\Ref}^2 \Rot \Smt_{4, \Ref} (\PiolaCon)^{-1}(v - \sum_{i=1}^{3}\Smt_i v) ) \cdot  \Grad q_\Ref \\
&\qquad =
\int_{\RefSim} \Rot \Smt_{4, \Ref} (\PiolaCon)^{-1}(v - \sum_{i=1}^{3}\Smt_i v)  \Rot( \Grad q_\Ref) b_{\Ref}^2
= 0.
\end{split}
\end{equation*}
Next, recall the definitions of $\Smt_{3, \Ref}$ and $\Smt_3$ in \eqref{E3ref-definition} and \eqref{E3-definition}, respectively. Integrating by parts, changing variables twice and invoking Lemma~\ref{L:loc-Gauss}, we obtain
\begin{equation*}
\begin{split}
&\int_{\RefSim} (\PiolaCon)^{-1} \Smt_3 v \cdot m_\Ref
= \int_{\RefSim} \Smt_{3, \Ref} v \cdot \Grad q_\Ref\\
&\qquad=-\int_{\RefSim} (\Div \Smt_{3, \Ref} v) q_\Ref 
=-\int_{\RefSim} J_K(\Divdisc{\DG} v - \sum_{i=1}^{2} \Div \Smt_i v)\circ F_K \:q_\Ref\\
&\qquad=-\int_K (\Divdisc{\DG} v - \sum_{i=1}^{2} \Div \Smt_i v) q_\Ref \circ F_{K}^{-1}
=-\int_K (v- \sum_{i=1}^{2}\Smt_i v) \cdot \Grad(q_\Ref \circ F_K^{-1})\\
&\qquad=\int_{\RefSim} (\PiolaCon)^{-1} (v-\sum_{i=1}^{2}\Smt_i v) \cdot m_\Ref.
\end{split}
\end{equation*} 
Combining this identity with the previous one and rearranging terms, we infer that \eqref{EDG-conditions-elemmom-equiv} holds for all $m_\Ref \in \Grad \Poly{\degree-1}(\RefSim)$. This concludes the proof for $\degree \in \{1,2\}$. For $\degree \geq 3$, assume further $m_\Ref \in x^\perp \Poly{\degree-3}(\RefSim)$. The definitions of $\Smt_{4, \Ref}$ and $\Smt_4$ in \eqref{E4ref-definition} and \eqref{E4-definition}, respectively, and the integration by parts rule \eqref{ibp-d=2} reveal that
\begin{equation*}
\begin{split}
\int_{\RefSim} (\PiolaCon)^{-1} \Smt_4 v \cdot m_\Ref
&=
\int_{\RefSim} \Rot( \Smt_{4, \Ref} (\PiolaCon)^{-1}(v - \sum_{i=1}^{3} \Smt_i v) ) \Rot(m_\Ref) b_\Ref^2\\
&=\int_{\RefSim} (\PiolaCon)^{-1}(v - \sum_{i=1}^{3} \Smt_i v) \cdot m_\Ref.
\end{split}
\end{equation*} 
Rearranging terms, we infer that \eqref{EDG-conditions-elemmom-equiv} holds for all $m_\Ref \in x^\perp \Poly{\degree-3}(\RefSim)$. Thus, Lemma~\ref{L:decompositions} and the above discussion entail that $\Smt_\DG$ satisfies condition \eqref{EDG-conditions-elemmom}.
\end{proof}

\subsection{Stability of $\Smt_\DG$}
\label{SS:stability-EDG}
In this section we prove that the operator $\Smt_\DG$ defined in section~\ref{SS:construction-EDG} satisfies condition \eqref{EDG-conditions-stability}, i.e. it is stable in the norm $\norm{\cdot}_\DG$ and its stability constant $\norm{\Smt_\DG}$ is bounded in terms of the shape constant $\Shape_\Mesh$ of $\Mesh$ and of the polynomial degree \(\degree\). We begin by recalling a standard result concerning the operator $\Smt_1$ defined in \eqref{E1-definition}, see e.g., \cite[section~5.5.2]{DiPietro.Ern:12}.

\begin{lemma}[Local $L^2$-estimate of $\Smt_1$]
\label{L:averaging-estimate}
For all $v \in (\Polyell{\degree})^2$ and $K \in \Mesh$, we have
\begin{equation*}
\label{averaging-estimate}
\normLeb{v-\Smt_1 v}{K}
\leq C
\sum_{F \in \FacesM, F \cap K \neq \emptyset}
h_F^{\frac{1}{2}}
\normLeb{\Jump{v}}{F}.
\end{equation*}
\end{lemma}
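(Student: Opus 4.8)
The plan is to exploit the finite-dimensionality of $\Poly{\degree}$ together with the locality of $\Smt_1$, reducing the claim to an estimate on nodal values. Since $(v - \Smt_1 v)_{|K} \in \Poly{\degree}(K)$, I would first invoke the equivalence of norms on the reference simplex $\RefSim$, where $\normLeb{\hat p}{\RefSim}^2 \simeq \sum_{z \in \mathcal{L}_\degree(\RefSim)} \snorm{\hat p(z)}^2$ for $\hat p \in \Poly{\degree}(\RefSim)$. Transporting this to $K$ by the affine map $F_K$ and recalling $J_K \simeq h_K^{\Dim}$ yields
\begin{equation*}
\normLeb{v - \Smt_1 v}{K}^2
\leq C\, h_K^{\Dim} \sum_{z \in \mathcal{L}_\degree(K)} \snorm{v_{|K}(z) - (\Smt_1 v)(z)}^2.
\end{equation*}
Thus it suffices to control each nodal difference by jumps of $v$ evaluated at $z$.

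The second step is the combinatorial heart of the argument. For an interior node $z \in \mathcal{L}_\degree$, the definition \eqref{E1-definition} gives $(\Smt_1 v)(z) = N_z^{-1} \sum_{K' \ni z} v_{|K'}(z)$, so
\begin{equation*}
v_{|K}(z) - (\Smt_1 v)(z)
= \frac{1}{N_z} \sum_{K' \ni z} \bigl( v_{|K}(z) - v_{|K'}(z) \bigr).
\end{equation*}
I would connect $K$ and any $K'$ in the star of $z$ by a chain of simplices sharing faces through $z$ and telescope: each consecutive difference $v_{|K_{i-1}}(z) - v_{|K_i}(z)$ equals $\pm \Jump{v}_{|F_i}(z)$ for the shared face $F_i \ni z$. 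Shape regularity bounds $N_z$ and the chain length by a constant, whence $\snorm{v_{|K}(z) - (\Smt_1 v)(z)} \leq C \sum_{F \ni z} \snorm{\Jump{v}_{|F}(z)}$. For a node $z$ on $\partial\Domain$ one has $(\Smt_1 v)(z) = 0$, because the basis functions $\Phi^z_\degree$ belong to $\Lagr{\degree} \subseteq \SobH{\Domain}$ and hence vanish on $\partial\Domain$, while the sum in \eqref{E1-definition} runs only over interior nodes; the same telescoping, now terminated at a simplex carrying a boundary face $F \subseteq \partial\Domain$ with $z \in F$, reproduces $v_{|K}(z)$ using the boundary convention $\Jump{v}_{|F} = v_{|K}$.

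Finally, I would translate the pointwise jump values back into $L^2$-norms. Norm equivalence on the $(\Dim-1)$-dimensional face $F$ gives $\snorm{\Jump{v}_{|F}(z)}^2 \leq C\, h_F^{-(\Dim-1)} \normLeb{\Jump{v}}{F}^2$. Summing over $z \in \mathcal{L}_\degree(K)$ and the $\mathrm{O}(1)$ faces meeting $K$, and using $h_K \simeq h_F$ for faces in the star so that $h_K^{\Dim} h_F^{-(\Dim-1)} \simeq h_F$, I obtain
\begin{equation*}
\normLeb{v - \Smt_1 v}{K}^2
\leq C \sum_{F \in \FacesM,\, F \cap K \neq \emptyset} h_F\, \normLeb{\Jump{v}}{F}^2,
\end{equation*}
and the stated estimate follows from $\sqrt{\sum_F a_F^2} \leq \sum_F a_F$ with $a_F = h_F^{1/2} \normLeb{\Jump{v}}{F}$. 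The only delicate point is the bookkeeping in the telescoping step, in particular the uniform treatment of boundary nodes via the boundary-jump convention; the remaining arguments are routine scaling estimates whose constants depend only on $\Shape_\Mesh$ and $\degree$.
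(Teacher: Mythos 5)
Your proof is correct and is essentially the intended argument: the paper does not prove this lemma itself but cites \cite[section~5.5.2]{DiPietro.Ern:12}, where exactly this combination of nodal norm equivalence on $\Poly{\degree}(K)$, telescoping of the differences $v_{|K}(z)-v_{|K'}(z)$ through face-connected chains in the star of each Lagrange node, and an $L^\infty$--$L^2$ inverse estimate on faces is used. Your separate treatment of boundary nodes, using that $(\Smt_1 v)(z)=0$ there together with the convention $\Jump{v}_{|F}=v_{|K}$ on boundary faces, is also handled correctly.
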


Next, we prove that $\Smt_\DG$ enjoys the same local estimate as $\Smt_1$, possibly up to a different constant.  

\begin{proposition}[Local $L^2$-estimate of $\Smt_\DG$]
	\label{P:stability-loc}
	The operator $\Smt_\DG$ defined in section~\ref{SS:construction-EDG} is such that, for all $v \in (\Polyell{\degree})^2$ and $K \in \Mesh$,
        \begin{align}
        \label{stability-loc}
         \normLeb{v-\Smt_\DG v}{K}
         \leq C
         \sum_{F \in \FacesM, F \cap K \neq \emptyset}
         h_F^{\frac{1}{2}}
         \normLeb{\Jump{v}}{F}.
        \end{align} 
\end{proposition}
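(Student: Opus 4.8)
The plan is to establish the local estimate term by term, exploiting the recursive structure $\Smt_\DG = \Smt_1 + \Smt_2 + \Smt_3 + \Smt_4$ and the fact that each higher-order correction is controlled by the residual of the previous partial sums. The starting point is Lemma~\ref{L:averaging-estimate}, which already gives the desired bound for $\Smt_1$. It then suffices to bound $\normLeb{\Smt_i v}{K}$ for $i \in \{2,3,4\}$ by the right-hand side of \eqref{stability-loc}, since the triangle inequality applied to $v - \Smt_\DG v = (v - \Smt_1 v) - \Smt_2 v - \Smt_3 v - \Smt_4 v$ then concludes the argument. Throughout, I would work on a fixed $K$ and keep track only of the patch of faces and triangles touching $K$, so that locality of each operator is preserved.

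First I would handle $\Smt_2$. Since $(\Smt_2 v)_{|K}$ is built from the face bubbles $b_F$ with $F \subseteq \partial K$ and from the local solve \eqref{E2F-definition}, a standard scaling/equivalence-of-norms argument on the reference face shows $\normLeb{\Smt_2 v}{K} \Cleq \sum_{F} h_F^{1/2} \normLeb{\Avg{v} - \Smt_1 v}{F}$. I would then split $\Avg{v} - \Smt_1 v$ as $(\Avg{v} - v_{|K}) + (v_{|K} - \Smt_1 v)$: the first piece is controlled by $\normLeb{\Jump{v}}{F}$ directly (averages differ from one-sided traces by half the jump), and the second by a discrete trace inequality together with Lemma~\ref{L:averaging-estimate}. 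This yields the bound for $\Smt_2$ in the required form.

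Next I would treat $\Smt_3$. Using the Piola estimate \eqref{Piola-scaling} to pass to $\RefSim$, the stability of the minimization problem \eqref{E3ref-definition} (which, as a discrete Stokes solve with inf-sup constant on the fixed Alfeld-refined reference mesh, is bounded independently of $K$), and the Piola divergence rule \eqref{Piola-divergence}, I would reduce $\normLeb{\Smt_3 v}{K}$ to a constant times $h_K\,\normLeb{\Divdisc{\DG} v - \sum_{i=1}^2 \Div \Smt_i v}{K}$. The crucial device here is Lemma~\ref{L:loc-Gauss}, applied with $\Smt = \Smt_1 + \Smt_2$ (which satisfies \eqref{EDG-conditions-facemom} by Theorem~\ref{T:conservation}): it rewrites this divergence residual, tested against a scalar polynomial, as $-\int_K (v - \Smt_1 v - \Smt_2 v) \cdot \Grad q$, so that an inverse estimate $\normLeb{\Grad q}{K} \Cleq h_K^{-1} \normLeb{q}{K}$ converts the factor $h_K$ into control by $\normLeb{v - \Smt_1 v - \Smt_2 v}{K}$, already estimated above. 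The treatment of $\Smt_4$ is analogous but easier, since \eqref{E4ref-definition} together with Lemma~\ref{L:decompositions} gives a stable reference solve whose data is $(\PiolaCov)^{-1}(v - \sum_{i=1}^3 \Smt_i v)$; invoking \eqref{Piola-scaling} and \eqref{Piola-integral}, one bounds $\normLeb{\Smt_4 v}{K}$ by $\normLeb{v - \Smt_1 v - \Smt_2 v - \Smt_3 v}{K}$, which is itself already controlled.

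The main obstacle I anticipate is the $\Smt_3$ step: one must argue that the inf-sup constant of the reference Stokes-like problem on $\Mesh_\Ref$ is a fixed number (guaranteed by \cite{Guzman.Neilan:18}), and then correctly combine the Piola scaling, the inverse inequality, and Lemma~\ref{L:loc-Gauss} so that the powers of $h_K$ and the Jacobian factors $J_K$ cancel to leave a dimensionally consistent estimate in which $h_K$ is absorbed. Once the divergence residual is re-expressed via Lemma~\ref{L:loc-Gauss} and bounded by the already-estimated quantity $\normLeb{v - \Smt_1 v - \Smt_2 v}{K}$, the recursion closes and the final triangle inequality delivers \eqref{stability-loc}.
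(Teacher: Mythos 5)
Your overall architecture --- the triangle inequality over $\Smt_\DG=\sum_{i=1}^4\Smt_i$, Lemma~\ref{L:averaging-estimate} for $\Smt_1$, and recursive control of each $\normLeb{\Smt_i v}{K}$ by the residuals of the previous partial sums --- is exactly the paper's, and your treatments of $\Smt_2$ and $\Smt_4$ match its proof essentially step by step (up to the slip that the data of $\Smt_{4,\Ref}$ is pulled back by $(\PiolaCon)^{-1}$, not $(\PiolaCov)^{-1}$).

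The $\Smt_3$ step, however, contains a genuine gap. After reducing to $h_K\,\normLeb{\Divdisc{\DG}v-\Div(\Smt_1 v+\Smt_2 v)}{K}$, you propose to bound this residual by duality through Lemma~\ref{L:loc-Gauss}. But that lemma provides the identity $\int_K(\Divdisc{\DG}v-\Div\Smt v)\,q=-\int_K(v-\Smt v)\cdot\Grad q$ only for $q\in\Poly{\degree-1}(K)$, whereas the residual itself lies in $\Poly{\degree}(K)$: each summand $\Phi^z_{\degree-1}b_F$ of $\Smt_2 v$ has degree $\degree+1$, so $\Div\Smt_2 v\in\Poly{\degree}(K)$. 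The degree restriction is not cosmetic, since $\Divdisc{\DG}v$ is defined by duality against $\PolyellAvg{\degree-1}$ only, and the identity genuinely fails for higher-degree $q$. Your duality argument therefore controls only the $L^2(K)$-projection of the residual onto $\Poly{\degree-1}(K)$ and leaves the component of $\Div\Smt_2 v$ orthogonal to $\Poly{\degree-1}(K)$ unaccounted for. The repair is easy --- bound that component by the inverse estimate $\normLeb{\Div\Smt_2 v}{K}\Cleq h_K^{-1}\normLeb{\Smt_2 v}{K}$, with $\normLeb{\Smt_2 v}{K}$ already controlled --- but as written the step fails. The paper sidesteps the issue: it uses $\Div\Smt_1 v=\Divdisc{\DG}\Smt_1 v$ (conformity of $\Smt_1 v$) to rewrite part of the residual as $\Divdisc{\DG}(v-\Smt_1 v)$, bounds this directly from the explicit formula of Remark~\ref{R:discrete-divergence} together with the inverse trace inequality \eqref{DG-threshold}, and applies the inverse estimate for $\Div$ to both $v-\Smt_1 v$ and $\Smt_2 v$.
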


\begin{proof}
First of all, we recall that $\Smt_\DG = \sum_{i=1}^4 \Smt_i$ and apply the triangle inequality
\begin{equation*}
\normLeb{v-\Smt_\DG v}{K}
\leq
\normLeb{v-\Smt_1 v}{K}
+
\sum_{i=2}^4 \normLeb{\Smt_i v}{K}.
\end{equation*}
According to Lemma~\ref{L:averaging-estimate}, we only need to bound the last three summands in the right-hand side. We estimate these terms one by one. \medskip

\textit{Estimate of $\Smt_2$}. The definition of $\Smt_2$ in \eqref{E2-definition} and standard scaling arguments imply that
\begin{align*}
\normLeb{\Smt_2 v}{K}
&\leq C 
\sum_{F \in \FacesMint, F \subseteq \partial K} \snorm{K}^{\frac{1}{2}} \sum_{z \in \mathcal{L}_{\degree-1}(F)}
\snorm{\Smt_{2,F}(\Avg{v}-\Smt_1 v)(z)}\\
&\leq C \sum_{F \in \FacesMint, F \subseteq \partial K}
h_F^{\frac{1}{2}}
\normLeb{\Smt_{2,F}(\Avg{v}- \Smt_1 v) b_F^{\frac{1}{2}}}{F}.
\end{align*}
Recalling also the definition of $\Smt_{2,F}$ in \eqref{E2F-definition}, we infer that
\begin{equation*}
\normLeb{\Smt_{2,F}(\Avg{v}- \Smt_1 v) b_F^{\frac{1}{2}}}{F}
\leq
\normLeb{\Avg{v}- \Smt_1 v}{F}
\end{equation*}
for all $F \in \FacesMint$ with $F \subseteq \partial K$. 
We insert this estimate into the previous one. Then, we observe that $\snorm{\Avg{v} - v_{|K}} = \frac{1}{2}\snorm{\Jump{v}}$ on each face $F$ involved in the above summation. This fact and an inverse trace inequality entail that
\begin{equation*}
\normLeb{\Smt_2 v}{K}
\leq C (  \normLeb{v-\Smt_1 v}{K}
+
\sum_{F \in \FacesMint, F \subseteq \partial K} h_F^{\frac{1}{2}} \normLeb{\Jump{v}}{F} ).
\end{equation*}
Then, Lemma~\ref{L:averaging-estimate} yields
\begin{equation}
\label{stability-loc-E2}
\normLeb{\Smt_2 v}{K} 
\leq C
\sum_{F \in \FacesM, F \cap K \neq \emptyset}
h_F^{\frac{1}{2}}
\normLeb{\Jump{v}}{F}.
\end{equation}

\textit{Estimate of $\Smt_3$}. The definition of $\Smt_3$ in \eqref{E3-definition} and the transformation rule \eqref{Piola-scaling} imply that
\begin{equation*}
\normLeb{\Smt_3 v}{K}
\leq J_K^{-\frac{1}{2}} h_K
 \normLeb{\Smt_{3,\Ref}(J_K(\Divdisc{\DG} v - \sum_{i=1}^2 \Div \Smt_i v)\circ F_K)}{\RefSim}.
\end{equation*}
Since $\Smt_{3,\Ref}$ is a linear operator defined on a finite-dimensional space, it is bounded. We combine this observation with a change of variables
\begin{equation*}
\normLeb{\Smt_3 v}{K}
\leq C h_K
\normLeb{\Divdisc{\DG} v - \sum_{i=1}^{2} \Div \Smt_i v}{K}.
\end{equation*}
The inclusion $\Smt_1 v \in (\Polyell{\degree} \cap \SobH{\Domain})^2$ reveals that $\Div \Smt_1 v = \Divdisc{\DG} \Smt_1 v$. Recalling the equivalent definition of $\Divdisc{\DG}$ in Remark~\ref{R:discrete-divergence}, we obtain
\begin{align*}
\normLeb{\Divdisc{\DG}( v-\Smt_1 v) }{K}
&\leq C (\normLeb{\Div (v-\Smt_1)}{K} +  
\sum_{F \in \FacesM, F \subseteq \partial K}h_F^{-\frac{1}{2}}\normLeb{\Jump{v}}{F} )
\end{align*}
where we have made use also of the identity $\Jump{\Smt_1 v} = 0$ on
$\Skel$ and of the inverse inequality \eqref{DG-threshold}. We combine
this bound with the previous one and apply twice the inverse inequality $\normLeb{\Div \cdot}{K} \leq C h_K^{-1} \normLeb{\cdot}{K}$. This entails that
\begin{align*}
\normLeb{\Smt_3 v}{K}
\leq C
( \normLeb{v-\Smt_1 v}{K}
+ \normLeb{\Smt_2 v}{K}
+ \sum_{F \in \FacesM, F \subseteq \partial K}h_F^{\frac{1}{2}}\normLeb{\Jump{v}}{F} ).
\end{align*}
Then, Lemma~\ref{L:averaging-estimate} and inequality \eqref{stability-loc-E2} yield
\begin{equation}
\label{stability-loc-E3}
\normLeb{\Smt_3 v}{K} 
\leq C
\sum_{F \in \FacesM, F \cap K \neq \emptyset}
h_F^{\frac{1}{2}}
\normLeb{\Jump{v}}{F}.
\end{equation}

\textit{Estimate of $\Smt_4$}.  Let $K \in \Mesh$. The definition of $\Smt_4$ in \eqref{E4-definition} and the transformation rule \eqref{Piola-scaling} imply that
\begin{equation*}
\normLeb{\Smt_4 v}{K}
\leq J_K^{-\frac{1}{2}} h_K
\normLeb{\Curl( b_{\Ref}^2 \Rot \Smt_{4, \Ref} (\PiolaCon)^{-1}(v - \sum_{i=1}^{3}\Smt_i v)  )}{\RefSim}.
\end{equation*}
Since $\Smt_{4,\Ref}$ is a linear operator defined on a finite-dimensional space, it is bounded. We combine this observation with an inverse estimate, the transformation rule \eqref{Piola-scaling} and the triangle inequality
\begin{equation*}
\begin{split}
\normLeb{\Smt_4 v}{K}
&\leq C J_K^{-\frac{1}{2}} h_K
\normLeb{(\PiolaCon)^{-1}(v - \sum_{i=1}^{3}\Smt_i v)}{\RefSim}\\
&\leq C ( \normLeb{v-\Smt_1 v}{K} + \sum_{i=2}^{3} \normLeb{\Smt_i v}{K}).
\end{split}
\end{equation*}
Then, Lemma~\ref{L:averaging-estimate} and inequalities \eqref{stability-loc-E2} and \eqref{stability-loc-E3} yield
\begin{equation*}
\normLeb{\Smt_3 v}{K} 
\leq C
\sum_{F \in \FacesM, F \cap K \neq \emptyset}
h_F^{\frac{1}{2}}
\normLeb{\Jump{v}}{F}.\qedhere
\end{equation*}
\end{proof}

The local estimate in Proposition~\ref{P:stability-loc} ensures that $\Smt_\DG$ satisfies condition \eqref{EDG-conditions-stability}. 

\begin{theorem}[Stability of $\Smt_\DG$]
\label{T:stability}
The operator $\Smt_\DG$ defined in section~\ref{SS:construction-EDG} satisfies condition \eqref{EDG-conditions-stability} in that, for all $v \in (\Polyell{\degree})^2$, we have 
\begin{equation*}
\label{stability}
\normLeb{\Grad \Smt_\DG v}{\Domain}
\leq C \max\{ 1, 1/\sqrt{\eta} \} \norm{v}_\DG.
\end{equation*}
\end{theorem}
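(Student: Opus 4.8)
The plan is to derive the claimed $H^1$-type bound from the local $L^2$-estimate of Proposition~\ref{P:stability-loc} by means of an elementwise inverse inequality, with the factor $1/\sqrt{\eta}$ emerging from the mismatch between the $1/h$ weight produced by that inequality and the $\eta/h$ weight carried by the jump terms in $\norm{\cdot}_\DG$. Since $\Smt_\DG v \in \SobH{\Domain}^2$, its jumps vanish on $\Skel$ and the left-hand side in fact coincides with $\norm{\Smt_\DG v}_\DG$. Fix $K \in \Mesh$. On $K$ one has $\Grad \Smt_\DG v = \Grad v + \Grad(\Smt_\DG v - v)$, so by the triangle inequality it suffices to control $\normLeb{\Grad(\Smt_\DG v - v)}{K}$, because $\normLeb{\GradM v}{\Domain} \leq \norm{v}_\DG$ directly from the definition of the dG norm.

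For the remaining term I would invoke an inverse inequality. The function $\Smt_\DG v - v$ is piecewise polynomial of degree bounded in terms of $\degree$ and belongs to $\Sob{K}^2$, being the difference of two $H^1(K)$ functions. Hence
\[
\normLeb{\Grad(\Smt_\DG v - v)}{K} \leq C h_K^{-1} \normLeb{\Smt_\DG v - v}{K}.
\]
Combining this with Proposition~\ref{P:stability-loc} and the shape-regularity estimate $h_F \approx h_K$, valid for every face $F$ touching $K$, yields
\[
\normLeb{\Grad(\Smt_\DG v - v)}{K} \leq C \sum_{F \in \FacesM,\, F \cap K \neq \emptyset} h_F^{-\frac{1}{2}} \normLeb{\Jump{v}}{F}.
\]

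Summing the squares over $K \in \Mesh$ and using that each element touches, and each face is touched by, only a bounded number of mesh objects (finite overlap controlled by $\Shape_\Mesh$), together with the identity $\sum_{F \in \FacesM} h_F^{-1} \normLeb{\Jump{v}}{F}^2 = \eta^{-1} \int_\Skel \tfrac{\eta}{h}\snorm{\Jump{v}}^2 \leq \eta^{-1}\norm{v}_\DG^2$, I would obtain $\normLeb{\Grad(\Smt_\DG v - v)}{\Domain} \leq C\eta^{-\frac{1}{2}}\norm{v}_\DG$. A final triangle inequality with $\normLeb{\GradM v}{\Domain} \leq \norm{v}_\DG$ then gives $\normLeb{\Grad \Smt_\DG v}{\Domain} \leq C(1 + 1/\sqrt{\eta})\norm{v}_\DG$, which is the asserted estimate after absorbing the constant into $C \max\{1, 1/\sqrt{\eta}\}$.

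The \emph{main obstacle} is the inverse inequality itself, since $\Smt_\DG v$ is not a single polynomial on $K$: while $\Smt_1 v$, $\Smt_2 v$ and $\Smt_4 v$ are polynomial on $K$, the summand $\Smt_3 v$ is only piecewise polynomial on the image under $F_K$ of the Alfeld refinement $\Mesh_\Ref$ of $\RefSim$. One must therefore read $\Grad(\Smt_\DG v - v)$ as the broken gradient over this sub-triangulation and apply the inverse inequality piecewise. This is legitimate precisely because the refinement is fixed on $\RefSim$ and transported by affine maps $F_K$, hence shape-regular with constants depending only on $\degree$ and the reference configuration, and because $\Smt_\DG v - v \in \Sob{K}^2$ ensures that the piecewise gradient coincides almost everywhere with $\Grad(\Smt_\DG v - v)$. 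All remaining steps are routine scaling and summation arguments governed by $\Shape_\Mesh$.
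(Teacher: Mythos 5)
Your proposal is correct and follows essentially the same route as the paper: an elementwise inverse inequality applied to Proposition~\ref{P:stability-loc}, summation over elements using the finite-overlap property governed by $\Shape_\Mesh$, and a final triangle inequality that produces the factor $\max\{1,1/\sqrt{\eta}\}$ from the mismatch between the $1/h$ and $\eta/h$ weights. Your explicit remark that the inverse inequality must be applied piecewise over the affine image of the Alfeld refinement $\Mesh_\Ref$ (since $\Smt_3 v$ is not a single polynomial on $K$) is a point the paper leaves implicit, and it is handled correctly.
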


\begin{proof}
Let $K \in \Mesh$. An inverse estimate and Proposition~\ref{P:stability-loc} imply that
\begin{equation*}
\normLeb{\Grad(v-\Smt_\DG v)}{K}
\leq C 
\sum_{F \in \FacesM, F \cap K \neq \emptyset}
h_F^{-\frac{1}{2}}
\normLeb{\Jump{v}}{F}.
\end{equation*}
We square both sides in this inequality and sum over all $K \in \Mesh$. Recalling that the number of triangles touching a given edge is bounded in terms of the shape constant of $\Mesh$, we obtain
\begin{equation*}
\normLeb{\GradM (v - \Smt_\DG v)}{\Domain}
\leq C
\left( \int_\Skel h^{-1} \snorm{\Jump{v}}^2 \right)^{\frac{1}{2}}. 
\end{equation*}
We conclude by recalling the definition of the norm $\norm{\cdot}_\DG$ in section~\ref{SS:dG-discretization}. 
\end{proof}

\subsection{Main results}
\label{SS:main-results}

We are now able to derive the main result of this paper. For this purpose, we invoke \cite[Corollary~1]{Veeser:16} and derive the following upper bound of the velocity best error
\begin{equation*}
\inf_{w \in (\Polyell{\degree})^2}\norm{u-w}_\DG
\leq
\inf_{w \in (\Polyell{\degree} \cap \SobH{\Domain})^2}
\normLeb{\Grad(u-w)}{\Domain}
\leq C
\inf_{w \in (\Polyell{\degree})^2}
\normLeb{\GradM(u-w)}{\Domain}
\end{equation*}
for all $u \in \SobH{\Domain}^2$. Notice that the right-hand side is independent of the penalty parameter $\eta$ and bounds the left-hand side also from below. We combine this bound with Theorems~\ref{T:qopt-prerob}, \ref{T:conservation} and \ref{T:stability}. Recall also the definition of $\underline{\alpha}_\DG$ and the upper bounds of $\beta_\DG^{-1}$ and $\delta_\DG$ in \eqref{coercivity+boundedness+infsup-DG} and Lemma~\ref{L:discretely-divfree-approx}, respectively. 

\begin{theorem}[Quasi-optimality and pressure robustness by $\Smt_\DG$]
	\label{T:qopt-prerob-EDG}
	Let $\eta > \overline{\eta}$, where $\overline{\eta}$ is as in \eqref{DG-threshold}. Denote by $(u,p)$ and $(u_\DG, p_\DG)$ the solutions of \eqref{Stokes} and \eqref{Stokes-dG}, respectively, in dimension $\Dim = 2$, with viscosity $\Visc > 0$ and load $f \in \SobHD{\Domain}$. Moreover, let $\Smt_\DG$ be the operator defined in section~\ref{SS:construction-EDG}. Then, we have
	\begin{equation*}
	\norm{u-u_\DG}_\DG
	\leq C \sqrt{\eta} 
	\inf_{w \in (\Polyell{\degree})^2}
	\normLeb{\GradM(u-w)}{\Domain}
	\end{equation*}
	and
	\begin{equation*}
	\normLeb{p-p_\DG}{\Domain}
	\leq
	C \Visc \eta
	\inf_{w \in (\Polyell{\degree})^2}
	\normLeb{\GradM(u-w)}{\Domain}
	+
	\inf_{q \in \PolyellAvg{\degree-1}}
	\normLeb{p-q}{\Domain}. 
	\end{equation*}
\end{theorem}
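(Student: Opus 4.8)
The plan is to assemble the statement from the abstract error estimates of Theorem~\ref{T:qopt-prerob} together with the properties of the concrete operator $\Smt_\DG$ established in Theorems~\ref{T:conservation} and \ref{T:stability}, and then to replace the resulting best-approximation error with the $\eta$-independent quantity $\inf_w \normLeb{\GradM(u-w)}{\Domain}$ by means of the chain of inequalities displayed just before the theorem, which rests on \cite[Corollary~1]{Veeser:16}. Since all the genuine analytical work has already been carried out, the argument at this stage is essentially a bookkeeping of constants; the restriction to $\Dim = 2$ is inherited purely from the fact that $\Smt_\DG$ has been constructed only in that case in section~\ref{SS:construction-EDG}.

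First I would check that $\Smt_\DG$ fulfills all the hypotheses of Theorem~\ref{T:qopt-prerob}. The consistency conditions \eqref{consistency-qopt-conditions} and the divergence-preservation \eqref{consistency-qopt-divergence} are exactly the content of Theorem~\ref{T:conservation}. The stability estimate of Theorem~\ref{T:stability} reads $\normLeb{\Grad \Smt_\DG v}{\Domain} \leq C \max\{1, 1/\sqrt{\eta}\} \norm{v}_\DG$; since we assume $\eta > \overline{\eta}$, the factor $\max\{1, 1/\sqrt{\eta}\}$ is bounded by a function of $\Shape_\Mesh$ and $\degree$ alone, so that $\norm{\Smt_\DG} \leq C$. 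With these hypotheses in place, Theorem~\ref{T:qopt-prerob} applies verbatim and yields the two abstract estimates for $\norm{u-u_\DG}_\DG$ and $\normLeb{p-p_\DG}{\Domain}$.

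Next I would insert the known bounds on the remaining constants. Recall that $\underline{\alpha}_\DG = 1 - \sqrt{\overline{\eta}/\eta}$ stays bounded away from zero for $\eta > \overline{\eta}$, that $\beta_\DG^{-1} \Cleq \max\{1, \sqrt{\eta}\}$ by \eqref{inf-sup-DG}, and that $\delta_\DG \leq 1 + C \beta_\DG^{-1} \Cleq \max\{1, \sqrt{\eta}\}$ by Lemma~\ref{L:discretely-divfree-approx}. Combining these with $\norm{\Smt_\DG} \leq C$, the velocity prefactor $C \delta_\DG (1 + \norm{\Smt_\DG})/\underline{\alpha}_\DG$ is $\Cleq \max\{1, \sqrt{\eta}\} \Cleq \sqrt{\eta}$, whereas the velocity-term prefactor in the pressure estimate, namely $C \delta_\DG (1 + \norm{\Smt_\DG})^2/(\underline{\alpha}_\DG \beta_\DG)$, carries one additional factor $\beta_\DG^{-1}$ and is therefore $\Cleq \eta$. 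This reproduces exactly the asymptotic scaling anticipated in section~\ref{SS:weak-penalization}.

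Finally I would pass from $\inf_{w \in (\Polyell{\degree})^2} \norm{u-w}_\DG$ to $\inf_{w \in (\Polyell{\degree})^2} \normLeb{\GradM(u-w)}{\Domain}$ using the displayed upper bound preceding the theorem. As this conversion is independent of $\eta$, the $\sqrt{\eta}$ and $\eta$ scalings are preserved, giving the two claimed estimates. I do not expect any genuine obstacle here, since every nontrivial ingredient is supplied by an earlier result; the only point demanding care is tracking the $\eta$-dependence, in particular recognizing that the pressure bound acquires one extra power of $\sqrt{\eta}$ relative to the velocity bound precisely through the additional $\beta_\DG^{-1}$ factor.
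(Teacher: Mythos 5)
Your proposal is correct and follows exactly the route the paper takes: the paper's own proof of this theorem is precisely the paragraph preceding the statement, which combines Theorem~\ref{T:qopt-prerob} with Theorems~\ref{T:conservation} and \ref{T:stability}, the bounds on $\underline{\alpha}_\DG$, $\beta_\DG^{-1}$ and $\delta_\DG$, and the best-error comparison from \cite{Veeser:16}. The only imprecision---shared with the paper---is the assertion that $\underline{\alpha}_\DG = 1-\sqrt{\overline{\eta}/\eta}$ stays bounded away from zero for all $\eta>\overline{\eta}$ (strictly this needs $\eta$ bounded away from $\overline{\eta}$); the constant bookkeeping is otherwise exactly right.
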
 

In section~\ref{SS:locking} we investigate numerically the impact of the penalty parameter $\eta$ on the error estimates, in connection with the discussion in section~\ref{SS:weak-penalization}. 

The above design of $\Smt_\DG$ can be simplified when the sole quasi-optimality (without pressure robustness) is concerned. In this case, we can apply the operator $\Smt$ from \cite[Proposition~3.4]{Veeser.Zanotti:18b} component-wise. This gives rise to
\begin{equation}
\label{EDG-simplified}
\widetilde \Smt_\DG v := (\Smt v_1, \Smt v_2), \qquad v = (v_1, v_2) \in (\Polyell{\degree})^2.
\end{equation}
According to \cite[Proposition~3.4]{Veeser.Zanotti:18b}, the resulting operator is moment-preserving and stable, in the sense that it satisfies conditions \eqref{EDG-conditions-stability}, \eqref{EDG-conditions-facemom} and \eqref{EDG-conditions-elemmom}. Then, the following weaker counterpart of Theorem~\ref{T:qopt-prerob-EDG} readily follows from Theorem~\ref{T:quasi-optimality}.

\begin{theorem}[Quasi-optimality by $\widetilde \Smt_\DG$]
	\label{T:quasi-optimality-EDG}
	Let $\eta > \overline{\eta}$, where $\overline{\eta}$ is as in \eqref{DG-threshold}. Denote by $(u,p)$ and $(u_\DG, p_\DG)$ the solutions of \eqref{Stokes} and \eqref{Stokes-dG}, respectively, in dimension $\Dim = 2$, with viscosity $\Visc > 0$ and load $f \in \SobHD{\Domain}$. Moreover, let $\Smt_\DG$ be replaced by $\widetilde{\Smt}_\DG$. Then, we have
	\begin{equation*}
	\Visc \norm{u-u_\DG}_\DG
	\leq C 
	\left( 
	\Visc \sqrt{\eta}
	\inf_{w \in (\Polyell{\degree})^2}
	\normLeb{\GradM(u-w)}{\Domain}
	+
	\inf_{q \in \PolyellAvg{\degree-1}}
	\normLeb{p-q}{\Domain}
	\right) 
	\end{equation*}
	and
	\begin{equation*}
	\normLeb{p-p_\DG}{\Domain}
	\leq C \sqrt{\eta}
	\left(  
	\Visc \sqrt{\eta}
	\inf_{w \in (\Polyell{\degree})^2}
	\normLeb{\GradM(u-w)}{\Domain}
	+
	\inf_{q \in \PolyellAvg{\degree-1}}
	\normLeb{p-q}{\Domain}
	\right) . 
	\end{equation*}
\end{theorem}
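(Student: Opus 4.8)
The plan is to obtain the statement as a direct corollary of Theorem~\ref{T:quasi-optimality}, so the first task is to verify that $\widetilde{\Smt}_\DG$ meets the hypotheses of that theorem. Since the scalar operator of \cite[Proposition~3.4]{Veeser.Zanotti:18b} maps $\Polyell{\degree}$ into $\SobH{\Domain}$ and preserves $\Poly{\degree-1}(F)$-moments (via averages) on interior faces and $\Poly{\degree-2}(K)$-moments on the elements, its component-wise application $\widetilde{\Smt}_\DG v = (\Smt v_1, \Smt v_2)$ maps into $\SobH{\Domain}^2$ and, testing component by component with $m_F = (m_1,m_2)$ and $m_K = (m_1,m_2)$, reproduces precisely \eqref{consistency-qopt-faces} and \eqref{consistency-qopt-simplices}, i.e. the full set \eqref{consistency-qopt-conditions}. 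The point worth stressing is that Theorem~\ref{T:quasi-optimality} asks only for these moment conditions and not for the divergence preservation \eqref{consistency-qopt-divergence}; this is exactly why the simpler $\widetilde{\Smt}_\DG$ is admissible here, whereas pressure robustness in Theorem~\ref{T:qopt-prerob-EDG} still requires the full operator of section~\ref{SS:construction-EDG}. I would also record $\norm{\widetilde{\Smt}_\DG}\le C$ uniformly in $\eta$, which follows from the local $L^2$-estimate of $\Smt$ in \cite[Proposition~3.4]{Veeser.Zanotti:18b} by repeating verbatim the summation argument used in the proof of Theorem~\ref{T:stability}.

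With the hypotheses in force, the two estimates of Theorem~\ref{T:quasi-optimality} apply with $\Smt_\DG$ replaced by $\widetilde{\Smt}_\DG$, and it remains only to track the $\eta$-dependence of their constants. I would insert $\norm{\widetilde{\Smt}_\DG}\le C$, bound $\underline{\alpha}_\DG^{-1}$ by a constant (recall $\underline{\alpha}_\DG = 1-\sqrt{\overline{\eta}/\eta}$ from \eqref{coercivity-DG}), use $\beta_\DG^{-1}\Cleq\max\{1,\sqrt{\eta}\}$ from \eqref{coercivity+boundedness+infsup-DG}, and hence $\delta_\DG\le 1+C\beta_\DG^{-1}\Cleq\sqrt{\eta}$ from Lemma~\ref{L:discretely-divfree-approx}. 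In the velocity estimate the effective constant multiplying the velocity best error is then $\frac{1+\norm{\widetilde{\Smt}_\DG}}{\underline{\alpha}_\DG}\,\delta_\DG\Cleq\sqrt{\eta}$, while the pressure best error carries only $\frac{1+\norm{\widetilde{\Smt}_\DG}}{\underline{\alpha}_\DG}\Cleq C$. In the pressure estimate the prefactor $\frac{(1+\norm{\widetilde{\Smt}_\DG})^2}{\underline{\alpha}_\DG\beta_\DG}\Cleq\sqrt{\eta}$ multiplies the bracket, so the velocity best error picks up the additional $\delta_\DG\Cleq\sqrt{\eta}$ and thus a factor $\eta$, while the pressure best error retains $\sqrt{\eta}$. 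These are exactly the powers of $\eta$ asserted in the statement.

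Finally, to replace the broken best error by the $\eta$-independent quantity in the conclusion, I would invoke the chain of inequalities established immediately before Theorem~\ref{T:qopt-prerob-EDG}, namely $\inf_{w\in(\Polyell{\degree})^2}\norm{u-w}_\DG\le C\inf_{w\in(\Polyell{\degree})^2}\normLeb{\GradM(u-w)}{\Domain}$ by \cite[Corollary~1]{Veeser:16}; substituting this into both estimates yields the stated right-hand sides. I do not expect a genuine obstacle: the argument is a specialization of Theorem~\ref{T:quasi-optimality}. The only points demanding care are the faithful translation of the scalar moment-preservation of \cite{Veeser.Zanotti:18b} into the vector conditions \eqref{consistency-qopt-conditions} together with the uniform stability $\norm{\widetilde{\Smt}_\DG}\le C$, and the bookkeeping of the powers of $\eta$ through $\beta_\DG^{-1}$ and $\delta_\DG$; one should moreover keep $\eta$ bounded away from $\overline{\eta}$ so that $\underline{\alpha}_\DG^{-1}$ is genuinely a constant independent of $\eta$, consistently with the asymptotic discussion in section~\ref{SS:weak-penalization}.
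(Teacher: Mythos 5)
Your proposal is correct and follows essentially the same route as the paper: the paper also obtains this theorem by noting that the component-wise operator $\widetilde{\Smt}_\DG$ of \eqref{EDG-simplified} inherits \eqref{EDG-conditions-stability}, \eqref{EDG-conditions-facemom} and \eqref{EDG-conditions-elemmom} from \cite[Proposition~3.4]{Veeser.Zanotti:18b}, then specializes Theorem~\ref{T:quasi-optimality} using the bounds on $\underline{\alpha}_\DG^{-1}$, $\beta_\DG^{-1}$, $\delta_\DG$ and the best-error comparison from \cite[Corollary~1]{Veeser:16}. Your bookkeeping of the powers of $\eta$ and your remark on keeping $\eta$ bounded away from $\overline{\eta}$ are both accurate.
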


\subsection{Construction of $\Smt_\DG$ for \(d=3\)}
\label{sec:3d-smoother}

We end this section with some comments concerning the extension of the previous results to the discretization of the Stokes equations in dimension $\Dim = 3$. First of all, the three-dimensional $\Curl$ operator has to be used instead of the two-dimensional operators $\Curl$ and $\Rot$ from \eqref{curl-rot-d=2}. The decomposition of vector-valued polynomials stated in Lemma~\ref{L:decompositions} and used in the definition of $\Smt_4$ reads    
\begin{equation*}
\Poly{k}(K)^3
=
\Grad \Poly{k+1}(K)
\oplus
x\wedge \Poly{k-1}(K)^3
\end{equation*}
for all $K \in \Mesh$ and $k \geq 0$, where
\begin{equation*}
x \wedge \Poly{k-1}(K)^3:=
\left \{ x \wedge r :=
\left( 
\begin{tabular}{c}
$x_2 r_3 - x_3 r_2 $\\
$x_3 r_1 - x_1 r_3 $\\
$x_1 r_2 - x_2 r_1 $
\end{tabular}\right) \mid r = (r_1, r_2, r_3) \in \Poly{k-1}(K)^3
\right\}.
\end{equation*}
This can be verified by noticing that the operator $\Curl$ is injective on $x \wedge \Poly{k-1}(K)^3$.

The construction of $\Smt_\DG$ remains the same as in section~\ref{SS:construction-EDG}, up to the following minor modifications. The face bubble function $b_F$ involved in the definition of $\Smt_2$ has degree three (and not two). Consequently, the operator $\Smt_{3, \Ref}$ maps $\PolyellAvg{\degree+1}(\Mesh_\Ref)$ into $\Lagr{\degree+2}(\Mesh_\Ref)^3$, so as to guarantee that $\Smt_3$ is well-defined. Finally, the volume bubble function $b_\Ref$ involved in the definition of $\Smt_4$ has degree four (and not three). 

With these ingredients, the statements and the proofs of the results in sections~\ref{SS:preservation-EDG}-\ref{SS:main-results} can be easily adapted to the case $\Dim = 3$.

\section{Numerical experiments}
\label{S:numerical-experiments}

We now discuss the results obtained when approximating the solution of the Stokes equations \eqref{Stokes} with
\begin{equation*}
\Dim = 2 \qquad \qquad
\Domain = (0,1) \times (0,1) \qquad \qquad
\Visc = 1.
\end{equation*}

We discretize the domain $\Domain$ by the following two families of meshes. Given $N \geq 0$, we divide $\Domain$ into $2^N \times 2^N$ identical squares with area $2^{-2N}$. Then, we obtain the `diagonal' mesh $\Mesh^D_N$ by drawing the diagonal with positive slope of each square. Similarly, we obtain the `crisscross' mesh $\Mesh^C_N$ by drawing both diagonals of each square, see Figure~\ref{F:meshes}. 

We test the discretization \eqref{Stokes-dG} of the Stokes equations with
\begin{itemize}
	\item $\Smt_\DG = \mathrm{Id}$ as in \cite{Hansbo.Larson:02} (standard discretization),
	\item $\Smt_\DG$ defined by \eqref{EDG-simplified} as in Theorem~\ref{T:quasi-optimality-EDG} (quasi-optimal discretization),
	\item $\Smt_\DG$ defined by section~\ref{SS:construction-EDG} as in Theorem~\ref{T:qopt-prerob-EDG} (quasi-optimal and pressure robust discretization).
\end{itemize}
The first option differs from the others, in that $\Smt_\DG$ does not map $(\Polyell{\degree})^2$ into $\SobH{\Domain}^2$. Therefore, the duality in the right-hand side of \eqref{Stokes-dG} is not defined for a general load $f \in \SobHD{\Domain}$. This observation clearly favors the second and the third discretizations when rough loads are concerned, cf. \cite[section~6.4]{Verfuerth.Zanotti:19}.

We consider only the first-order discretization in our experiments, i.e. we set 
\begin{equation*}
\degree = 1
\end{equation*}
in \eqref{Stokes-dG}. The numerical results are obtained with the help of ALBERTA 3.0 \cite{Heine.Koester.Kriessl.Schmidt.Siebert,Schmidt.Siebert:05}.

\begin{figure}[htp]
	\hfill
	\subfloat{\includegraphics[width=0.4\hsize]{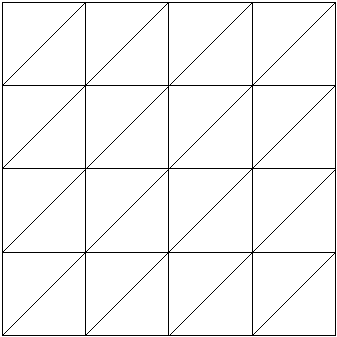}}
	\hfill
	\subfloat{\includegraphics[width=0.4\hsize]{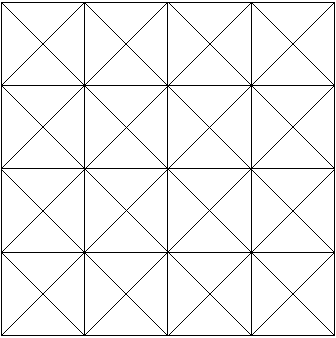}}
	\hfill
	\caption{Diagonal mesh $\Mesh_N^D$ (left) and crisscross mesh $\Mesh_N^C$ (right) with $N=2$.}
	\label{F:meshes}
\end{figure}

\subsection{Smooth exact solution}
\label{SS:smooth-exact-solution}

We first consider a test case with smooth exact solution, namely
\begin{equation*}
u(x_1, x_2) = \Curl(x_1^2(1-x_1)^2x_2^2(1-x_2)^2 )
\qquad
p(x_1, x_2) = (x_1 - 0.5)(x_2 - 0.5).
\end{equation*}
We use the crisscross meshes $\Mesh^C_N$ with $N \in \{ 0, 1, \dots 8 \}$ and the penalty parameter
\begin{equation*}
\eta = 6.
\end{equation*}
We report some values of the velocity error $\norm{u-u_\DG}_\DG$ and of the pressure error $\normLeb{p-p_\DG}{\Domain}$, for the three discretizations listed above, in Tables~\ref{F:smooth-sol-velocity} and \ref{F:smooth-sol-pressure}, respectively. For each sequence of errors $(e_N)$, we compute the experimental order of convergence
\begin{equation*}
\label{EOC}
\mathrm{EOC}_N 
:= 
\frac{\log(e_N / e_{N-1})}{\log(\#\Mesh_{N-1}^C / \#\Mesh_N^C)}, \qquad N \geq 1 
\end{equation*}
where $\#\Mesh_N^C$ denotes the number of triangles in $\Mesh_N^C$. Observing the numerical data, we see that the errors of the three discretizations behave quite similarly and converge to zero at the maximum decay rate $(\#\Mesh_N^C)^{-0.5}$. In this case, the standard discretization should be preferred for the easier construction of the operator $\Smt_\DG$.  

\begin{table}[htp]
	\begin{tabular}{r|cc|cc|cc}
		N &
		$\mathtt{stnd}$  & EOC &
		$\mathtt{qopt}$  & EOC &
		$\mathtt{prob}$  & EOC \\[1ex]
		\hline &&\\[-1.5ex]
		4 &
		8.2516e-03 &
		&      
		8.3795e-03 &
		&    
		8.5337e-03 &  
		\\
		5 &
		3.8937e-03 &  
		\raisebox{1.5ex}[0pt]{0.54} &
		3.9344e-03 &  
		\raisebox{1.5ex}[0pt]{0.55} & 
		4.1273e-03 &   
		\raisebox{1.5ex}[0pt]{0.52}
		\\
		6 &  
		1.8797e-03 &  
		\raisebox{1.5ex}[0pt]{0.53} &    
		1.8910e-03 &  
		\raisebox{1.5ex}[0pt]{0.53} &   
		2.0231e-03 &  
		\raisebox{1.5ex}[0pt]{0.51}
		\\
		7 &  
		9.2180e-04 &
		\raisebox{1.5ex}[0pt]{0.51} &    
		9.2477e-04 &  
		\raisebox{1.5ex}[0pt]{0.52} &   
		1.0007e-03 &  
		\raisebox{1.5ex}[0pt]{0.51}
		\\
		8 &  
		4.5621e-04 &  
		\raisebox{1.5ex}[0pt]{0.51} &   
		4.5698e-04 &  
		\raisebox{1.5ex}[0pt]{0.51} &   
		4.9756e-04 &  
		\raisebox{1.5ex}[0pt]{0.50}
	\end{tabular}
	\caption{Section~\ref{SS:smooth-exact-solution}. Velocity errors of the standard ($\mathtt{stnd}$), quasi-optimal ($\mathtt{qopt}$) and quasi-optimal and pressure robust ($\mathtt{prob}$) discretizations
		with experimental orders of convergence.}
\label{F:smooth-sol-velocity}
\end{table}

\begin{table}[htp]
	\begin{tabular}{r|cc|cc|cc}
		N &
		$\mathtt{stnd}$  & EOC &
		$\mathtt{qopt}$  & EOC &
		$\mathtt{prob}$  & EOC \\[1ex]
		\hline &&\\[-1.5ex]
		 4 &  
		 4.4477e-03 &
		 & 
		 4.4862e-03 &  
		 &    
		 4.3843e-03 &  
		 \\
		 5 &  
		 2.2248e-03 &   
		 \raisebox{1.5ex}[0pt]{0.50} &    
		 2.2377e-03 &  
		 \raisebox{1.5ex}[0pt]{0.50} &   
		 2.2109e-03 &  
		 \raisebox{1.5ex}[0pt]{0.49}
		 \\
		 6 &  
		 1.1142e-03 &  
		 \raisebox{1.5ex}[0pt]{0.50} &   
		 1.1178e-03 &  
		 \raisebox{1.5ex}[0pt]{0.50} &   
		 1.1109e-03 &   
		 \raisebox{1.5ex}[0pt]{0.50}
		 \\
		 7 &  
		 5.5781e-04 &  
		 \raisebox{1.5ex}[0pt]{0.50} &   
		 5.5878e-04 &  
		 \raisebox{1.5ex}[0pt]{0.50} &   
		 5.5692e-04 &  
		 \raisebox{1.5ex}[0pt]{0.50}
		 \\
		 8 &  
		 2.7912e-04 &  
		 \raisebox{1.5ex}[0pt]{0.50} &   
		 2.7937e-04 &  
		 \raisebox{1.5ex}[0pt]{0.50} &   
		 2.7884e-04 &  
		 \raisebox{1.5ex}[0pt]{0.50}
	\end{tabular}
	\caption{Section~\ref{SS:smooth-exact-solution}. Pressure errors of the standard ($\mathtt{stnd}$), quasi-optimal ($\mathtt{qopt}$) and quasi-optimal and pressure robust ($\mathtt{prob}$) discretizations with experimental orders of convergence.}
	\label{F:smooth-sol-pressure}
\end{table}

\subsection{Jumping pressure}
\label{SS:jumping-pressure}

In order to investigate the pressure robustness of the three discretizations, we consider a test case with smooth exact velocity and rough exact pressure, namely
\begin{equation*}
u(x_1, x_2) = \Curl\left(\:x_1^2(x_1-1)^2x_2^2(x_2-1)^2\:\right),
\quad
p(x_1, x_2) = \begin{cases}
\frac{\pi}{\pi-1} & \text{if} \;\; x_1 > \pi^{-1} \\
-\pi & \text{if} \;\; x_1 < \pi^{-1}
\end{cases}.
\end{equation*}
As before, we use the crisscross meshes $\Mesh^C_N$ with $N \in \{ 0, 1, \dots, 8 \}$ and the penalty parameter $\eta = 6$. Note that the meshes do not resolve the discontinuity of $p$ along the line $x_1 = \pi^{-1}$.

The data displayed in Figure~\ref{F:PreDisc} show that the velocity error of the quasi-optimal and pressure robust discretization fully exploits the regularity of $u$ and converges to zero at the maximum decay rate $(\#\Mesh^C_N)^{-0.5}$, in accordance with Theorem~\ref{T:qopt-prerob-EDG}. In contrast, the low regularity of $p$ impairs the approximation of $u$ in the standard discretization and in the quasi-optimal one. In fact, the corresponding velocity errors converge at the suboptimal decay rate $(\#\Mesh^C_N)^{-0.25}$. This confirms, in particular, that the first estimate in Theorem~\ref{T:quasi-optimality-EDG} captures the correct behavior of the velocity error in the quasi-optimal discretization.  

\begin{figure}[htp]
	\includegraphics[width=0.4\hsize]{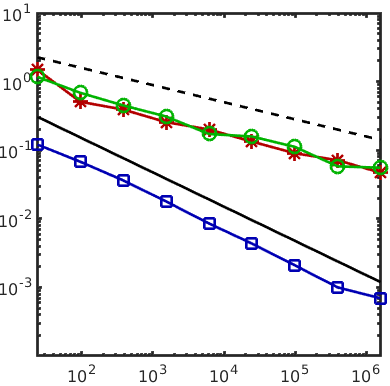}
	\caption{Section~\ref{SS:jumping-pressure}. Velocity error as a function of $\#\Mesh^C_N$ for the standard ($\circ$), quasi-optimal ($*$) and quasi-optimal and pressure robust ($\square$) discretizations. Plain and dashed lines indicate the decay rates $(\#\Mesh^C_N)^{-0.5}$ and $(\#\Mesh^C_N)^{-0.25}$, respectively.}
	\label{F:PreDisc}
\end{figure}

\subsection{Locking}
\label{SS:locking}

Finally, we investigate the robustness of the three discretizations with respect to the penalty parameter $\eta$. To this end, we consider the same exact solution as in section~\ref{SS:smooth-exact-solution}, with
\begin{equation*}
\eta \in \{ 10, 100, 1000 \}.
\end{equation*} 
For a fair comparison, we measure the velocity errors in the parameter-independent norm
\begin{equation*}
\norm{\cdot}^2_{\DG,1} := 
\normLeb{\GradM \cdot}{\Domain}^2
+
\int_\Skel \dfrac{1}{h} \snorm{\Jump{\cdot}}^2.
\end{equation*}
Note that $\norm{\cdot}_{\DG,1} \leq \norm{\cdot}_\DG$ for the considered values of $\eta$. Since the three discretizations produce qualitatively similar results, we pick the quasi-optimal and pressure robust discretization as representative of the others.

We discretize the domain by the diagonal meshes $\Mesh^D_N$ with $N \in \{ 0, 1, \dots, 7 \}$. This choice is motivated by the fact that the constant $\delta_\mathrm{SV}$ from \eqref{delta-SV} is proportional to $(\Mesh_N^D)^{0.5}$ as a consequence of $Z_\mathrm{SV} = \{0\}$, see  \cite[equation~11.3.8]{Brenner.Scott:08}. Hence, according to the discussion in section~\ref{SS:weak-penalization}, we expect to observe locking. The results displayed in Figure~\ref{F:locking} confirm our expectation. In particular, we observe that the pressure error is more sensitive to the size of $\eta$ than the velocity error, in accordance with the estimates in Theorem~\ref{T:qopt-prerob-EDG}.

One way to achieve robustness consists in weakening the jump penalization in the form $a_\DG$, as suggested in \eqref{weak-penalization}. With this modification, the results are almost insensitive to the size of $\eta$. Still, it has to be said that we obtain larger velocity errors than before for moderate values of $\eta$. 

\begin{figure}[htp]
	\hfill
	\subfloat{\includegraphics[width=0.48\hsize]{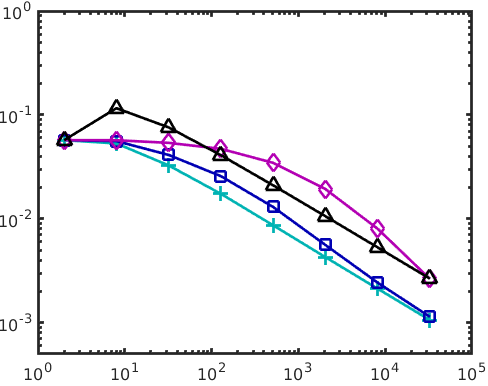}}
	\hfill
	\subfloat{\includegraphics[width=0.48\hsize]{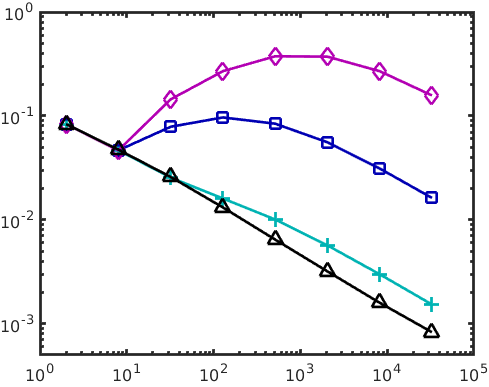}}
	\hfill
	\caption{Section~\ref{SS:locking}. Velocity (left) and pressure (right) errors of the quasi-optimal and pressure robust discretization as functions of $\#\Mesh_N^D$ for $\eta = 10$ ($+$), $\eta = 100$ ($\square$) and $\eta = 1000$ ($\lozenge$). The variant with weak jump penalization is also considered ($\triangle$) and the results for the three values of $\eta$ graphically coincide.} 
	\label{F:locking}
\end{figure}

\subsection*{Funding}
Christian Kreuzer gratefully acknowledges partial support by the DFG research grant “Convergence Analysis for Adaptive Discontinuous Galerkin Methods” (KR 3984/5-1). The research of Pietro Zanotti was supported by the GNCS-INdAM through
the program ``Finanziamento giovani ricercatori 2019-2020''.


\end{document}